\newtheorem{theorem}{Theorem}[section]
\newtheorem{definition}[theorem]{Definition}
\newtheorem{notation}[theorem]{Notation}
\newtheorem{lemma}[theorem]{Lemma}
\newtheorem{corollary}[theorem]{Corollary}
\newtheorem{remark}[theorem]{Remark}
\newenvironment{proof}{\noindent\mbox{\bf Proof.}}
{\hfill\mbox{\ding{113}}\bigskip}
\def\eqalign#1{\null\vcenter{\def\\{\cr}\openup\jot\m@th
  \ialign{\strut$\displaystyle{##}$\hfil&$\displaystyle{{}##}$\hfil
      \crcr#1\crcr}}\,}
\let\lt=<
\let\gt=>
\def\processVert{\ifmmode|\else\textbar\fi}
\def\subparagraph{\@startsection{paragraph}{5}{2\parindent}{0ex plus 0.1ex minus 0.1ex}%
{0ex}{\normalfont\small\itshape}}%
\newcommand\role[1]{\unskip}
\newcommand\aucollab[1]{\unskip}
\def\checkGraphicsWidth{\ifdim\Gin@nat@width>\textwidth
	\tsGraphicsScaleX\textwidth\else\Gin@nat@width\fi}
\def\checkGraphicsHeight{\ifdim\Gin@nat@height>.9\textheight
	\tsGraphicsScaleY\textheight\else\Gin@nat@height\fi}
\def\fixFloatSize#1{\@ifundefined{processdelayedfloats}{\setbox0=\hbox{\includegraphics{#1}}\ifnum\wd0<\columnwidth\relax\renewenvironment{figure*}{\begin{figure}}{\end{figure}}\fi}{}}
\let\ts@includegraphics\includegraphics
\def\inlinegraphic[#1]#2{{\edef\@tempa{#1}\edef\baseline@shift{\ifx\@tempa\@empty0\else#1\fi}\edef\tempZ{\the\numexpr(\numexpr(\baseline@shift*\f@size/100))}\protect\raisebox{\tempZ pt}{\ts@includegraphics{#2}}}}
\def\URL#1#2{\@ifundefined{href}{#2}{\href{#1}{#2}}}
\def\UrlOrds{\do\*\do\-\do\~\do\'\do\"\do\-}%
\g@addto@macro{\UrlBreaks}{\UrlOrds}
\def\wileyIndent{1pt}
\renewenvironment{abstract}
{\vspace*{-1pc}\trivlist\item[]\leftskip\wileyIndent\hrulefill\par\vskip4pt\noindent\textbf{\abstractname}\mbox{\null}\\}{\par\noindent\hrulefill\endtrivlist}
\def\author#1{\gdef\@author{\hskip-\dimexpr(\tabcolsep)\hskip\wileyIndent\parbox{\dimexpr\textwidth-\wileyIndent}{\centering\bfseries#1}}}
\def\title#1{\gdef\@title{\centering\bfseries\ifx\@articleType\@empty\else\@articleType\\\fi#1}}
\let\@articleType\@empty \def\articletype#1{\gdef\@articleType{{\normalfont\itshape#1}}}
 \def\audegree#1{}
\date{}
\begin{document}

\title{On Constructivity and the Rosser Property: \\  a closer look at some G\"odelean proofs}

\author{\textsc{Saeed Salehi \,\&\,  Payam Seraji}}

\def\RunningHead{\textsc{\small S.~Salehi \& P.~Seraji} / {\sl On Constructivity and the Rosser Property \; }}
\def\RunningAuthor{S.~Salehi \& K.~Lajevardi}


\maketitle

\begin{abstract}
The proofs of  Kleene, Chaitin and Boolos for G\"odel's First Incompleteness Theorem are studied from the perspectives of constructivity and  the Rosser property. A proof of the incompleteness theorem has the Rosser property when the independence of the true but unprovable sentence can be shown by assuming only the (simple) consistency of the theory. It is known that G\"odel's own proof for his incompleteness theorem does not have the Rosser property, and we show that neither do Kleene's or Boolos' proofs. However, we show that a variant of Chaitin's proof can have the Rosser property. The proofs of G\"odel, Rosser and Kleene are constructive in the sense that they explicitly construct, by algorithmic ways, the independent sentence(s) from the theory. We show that the proofs of Chaitin and Boolos are not constructive, and they prove only the mere existence of the independent sentences.
\end{abstract}

\section*{} 

\section{Introduction}\label{intro}
A constructive proof provides an algorithm for constructing the claimed object; a non-constructive proof does not show the existence of that object algorithmically, even if sometimes  an effective procedure might be hidden inside the details. A proof then {\em is proved to be \textup{(}essentially\textup{)}   non-constructive} when one can show that there is no algorithm (computable function) which, given the assumptions (coded as input), produces the claimed object whose existence is demonstrated in the proof. Below, we will see one example of a (seemingly) non-constructive proof (namely, the proof of Kleene~\cite{kleene1} for G\"odel's first incompleteness theorem, stated below) which can be made constructive by unpacking some details; we will also see a couple of proofs (namely, the proofs of Boolos~\cite{boolos} and Chaitin~\cite{chai} for G\"odel's first incompleteness theorem) that are shown to be non-constructive, by proving the non-existence of any algorithm for computing the claimed object (namely, the true but unprovable sentence).

\noindent The (First) {\em Incompleteness Theorem} (of G\"odel~\cite{godel}) states that for a  sufficiently strong   {\sc re} theory $T$   there exists a sentence $\boldsymbol\psi_T$ in the language of $T$ such that
\begin{enumerate}
\item the sentence $\boldsymbol\psi_T$ is true (in the standard model of natural numbers);

\item  if $T$ is consistent then $T\nvdash \boldsymbol\psi_T$;

\item  if $T$ is $\omega$-consistent then $T\nvdash \neg\boldsymbol\psi_T$.
\end{enumerate}

\noindent
By {\em a proof of the incompleteness theorem} we mean a demonstration of the existence of such a sentence ($\boldsymbol\psi_T$) for any given consistent and {\sc re} theory $T$ that is  sufficiently strong (to be made precise later).
Such a proof witnesses  {\em  the Rosser property} (\cite{rosser}) when the condition of $\omega$-consistency can be replaced with (simple) consistency; that is to say that the condition 3 above can be replaced with the following condition
\begin{enumerate}
\item[3$^\prime$.] if $T$ is consistent then $T\nvdash \neg\boldsymbol\psi_T$.
\end{enumerate}

\noindent
G\"odel's original proof~\cite{godel} for his incompleteness theorem is constructive, i.e., given a (finite) description of a consistent {\sc re} theory (e.g.\ an input-free program which outputs the set of all the axioms of the theory) the proof exhibits, in an algorithmic way, a sentence which is true (in the standard model of natural numbers $\mathbb{N}$) but unprovable in the theory. For the independence of this sentence from the theory (i.e., the unprovability of its negation in the theory) G\"odel also assumes the theory to be  $\omega$-consistent; so if the theory is 
 $\omega$-consistent, then that (true) sentence is independent from the theory (see e.g.\ \cite{smor,smith}). It turned out later that the simple consistency of the theory does not suffice for the independence of the G\"odel sentence (from the theory) and the optimal condition (which is  much weaker than $\omega$-consistency) is \textsl{the consistency of the theory with its own consistency statement} (\cite[Theorems~35,36]{isaac}).
Rosser's  proof \cite{rosser} for G\"odel's first incompleteness theorem assumes only the simple consistency of the ({\sc re}) theory and constructs (algorithmically) an independent (and true) sentence. So, one can say that G\"odel's proof does  not have the Rosser property. Here, we will see that while a variant of the proof of Chaitin has the Rosser property (i.e., the independence of Chaitin's sentence from the theory can be proved by assuming only the simple consistency of the theory), the proof of Boolos does not have the Rosser property (and the optimal condition for the independence of a Boolos  sentence is the consistency of the theory with its own consistency statement).

\section{Kleene's Proof for G\"odel's Incompleteness Theorem}
A very cute proof for G\"odel's incompleteness theorem is that of Kleene (see e.g.\ \cite{kleene1,smith}) which deserves more recognition.

\begin{notation}[Computability] {\rm
Let $\boldsymbol\varphi_0,\boldsymbol\varphi_1,\boldsymbol\varphi_2,
\cdots$ be a list of all unary computable (partial recursive) functions (in a way that $\boldsymbol\varphi_i(j)$, if it exists, can be computed from $i$ and $j$). A recursively enumerable set  ({\sc re} for short) is the domain of $\boldsymbol\varphi_i$, for some $i\in\mathbb{N}$, which is denoted by $\mathcal{W}_i$. The notation $\boldsymbol\varphi_i(j)\!\!\uparrow$ means that the function $\boldsymbol\varphi_i$ is not defined at $j$, or $j\not\in\mathcal{W}_i$; and   $\boldsymbol\varphi_i(j)\!\!\downarrow$ means that $\boldsymbol\varphi_i$ is  defined at $j$ or $j\in\mathcal{W}_i$. Needless to say,  $\boldsymbol\varphi_i(j)=k$   means that $\boldsymbol\varphi_i$ is defined at $j$ and is equal  to $k$.}
\hfill \ding{71}
\end{notation}

\noindent
Robinson's Arithmetic is denoted by $\textsl{\textsf{Q}}$ (see~\cite{tmr} or~\cite{smith}). In all the results of this paper, the theory $\textsl{\textsf{Q}}$ can be replaced with a (much) weaker theory called $\textsl{\textsf{R}}$ (see~\cite{tmr}). The theory $\textsl{\textsf{Q}}$ is finitely axiomatizable, while $\textsl{\textsf{R}}$ is not.

\begin{theorem}[Kleene's Theorem]\label{kleeneth}
For a given consistent and {\sc re} theory $T$ that contains $\textsl{\textsf{Q}}$ there exists some $t\in\mathbb{N}$ such that $\boldsymbol\varphi_t(t)\!\!\uparrow$ but $T\nvdash``\boldsymbol\varphi_t(t)\!\!\uparrow\!\!"$.
\end{theorem}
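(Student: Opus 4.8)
The plan is to run a diagonal argument against the provability predicate of $T$, in exact parallel with the diagonalization that shows the halting problem undecidable. First I would define a unary partial computable function $\boldsymbol\psi$ which, on input $i$, searches systematically through all $T$-proofs (this search is effective because $T$ is {\sc re}) for a proof of the arithmetical sentence $``\boldsymbol\varphi_i(i)\!\!\uparrow\!\!"$; it halts with output $0$ if and when such a proof is found, and diverges otherwise. By construction
\[
\boldsymbol\psi(i)\!\!\downarrow \;\Longleftrightarrow\; T\vdash``\boldsymbol\varphi_i(i)\!\!\uparrow\!\!".
\]

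Since $\boldsymbol\psi$ is a fixed unary partial computable function, the standard enumeration of the $\boldsymbol\varphi_i$'s supplies an index $t$ with $\boldsymbol\psi=\boldsymbol\varphi_t$. Feeding the index into itself (the diagonal step), i.e.\ substituting $i=t$, yields the key equivalence
\[
\boldsymbol\varphi_t(t)\!\!\downarrow \;\Longleftrightarrow\; T\vdash``\boldsymbol\varphi_t(t)\!\!\uparrow\!\!".
\]
Note that no appeal to the recursion theorem is needed here; plain diagonalization against the fixed index $t$ suffices.

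I would then finish with a short case analysis driven by consistency. Suppose $\boldsymbol\varphi_t(t)\!\!\downarrow$. Then $``\boldsymbol\varphi_t(t)\!\!\downarrow\!\!"$ is a \emph{true} $\Sigma_1$ sentence, so the $\Sigma_1$-completeness of $\textsl{\textsf{Q}}\subseteq T$ gives $T\vdash``\boldsymbol\varphi_t(t)\!\!\downarrow\!\!"$, i.e.\ $T\vdash\neg``\boldsymbol\varphi_t(t)\!\!\uparrow\!\!"$; but the displayed equivalence simultaneously yields $T\vdash``\boldsymbol\varphi_t(t)\!\!\uparrow\!\!"$, contradicting the consistency of $T$. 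Hence $\boldsymbol\varphi_t(t)\!\!\uparrow$, and the equivalence then forces $T\nvdash``\boldsymbol\varphi_t(t)\!\!\uparrow\!\!"$, which is exactly the desired conclusion.

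The genuinely delicate points, which I would settle before the case analysis, are the two arithmetizations. One must fix a $\Sigma_1$ formula expressing $\boldsymbol\varphi_i(j)\!\!\downarrow$ — say $\exists s\,\mathrm{Halt}(i,j,s)$, with $\mathrm{Halt}$ a $\Delta_0$ (Kleene normal form) predicate for ``$\boldsymbol\varphi_i$ halts on $j$ within $s$ steps'' — so that $``\boldsymbol\varphi_i(i)\!\!\uparrow\!\!"$ is its $\Pi_1$ negation and $``\boldsymbol\varphi_i(i)\!\!\downarrow\!\!"$ the $\Sigma_1$ affirmation. The load-bearing fact is the $\Sigma_1$-completeness of $\textsl{\textsf{Q}}$ (equivalently of $\textsl{\textsf{R}}$): every true $\Sigma_1$ sentence is a theorem. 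This is what converts the \emph{semantic} truth $\boldsymbol\varphi_t(t)\!\!\downarrow$ into the \emph{syntactic} fact $T\vdash``\boldsymbol\varphi_t(t)\!\!\downarrow\!\!"$. I would also record that only simple consistency (never $\omega$-consistency) is invoked, and that the whole construction is visibly algorithmic in a description of $T$ — the feature the later sections exploit when they turn to constructivity.
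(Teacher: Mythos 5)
Your proposal is correct and is essentially the paper's own constructive proof: your function $\boldsymbol\psi$ has domain $\mathcal{K}_T=\{n\in\mathbb{N}\mid T\vdash``\boldsymbol\varphi_n(n)\!\!\uparrow\!\!"\}$, so an index $t$ with $\boldsymbol\psi=\boldsymbol\varphi_t$ is precisely the paper's $\texttt{t}$ with $\mathcal{W}_\texttt{t}=\mathcal{K}_T$, and your case analysis reproduces the paper's chain of implications ($\Sigma_1$-completeness of $\textsl{\textsf{Q}}\subseteq T$ plus simple consistency, with the equivalence's converse direction ruling out $T\vdash``\boldsymbol\varphi_t(t)\!\!\uparrow\!\!"$ once divergence is established). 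The only cosmetic difference is that you unwind the statement ``$\mathcal{K}_T$ is {\sc re}'' into an explicit proof-search program, which, as you rightly note, makes the algorithmic character of $t$ visible and avoids any appeal to the recursion theorem, exactly as in the paper.
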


\begin{proof} (Non-Constructive Proof): 
Let $\mathcal{K}_T=\{n\in\mathbb{N}\mid T\vdash``\boldsymbol\varphi_n(n)\!\!\uparrow\!\!"\}$; then we have $\mathcal{K}_T\subseteq K=\{n\in\mathbb{N}\mid \boldsymbol\varphi_n(n)\!\!\uparrow\}$ since if $T\vdash``\boldsymbol\varphi_n(n)\!\!\uparrow\!\!"$ but $\boldsymbol\varphi_n(n)\!\!\downarrow$ then the true $\Sigma_1$-sentence $``\boldsymbol\varphi_n(n)\!\!\downarrow\!\!"$ is provable in (the $\Sigma_1$-complete theory) $\textsl{\textsf{Q}}$ ($\subseteq T$) contradicting the consistency of $T$. Now, since $T$ is {\sc re} then so is $\mathcal{K}_T$, while $K$ is not an {\sc re} set because for any $n$ we have $n\in K \iff n\not\in\mathcal{W}_n$ and so $n\in K\boldsymbol\triangle\mathcal{W}_n$, thus $K\neq\mathcal{W}_n$ for all $n$. So, $\mathcal{K}_T\subsetneqq K$; therefore, there must exist some $t\in K-\mathcal{K}_T$. For this $t$ we have $\boldsymbol\varphi_t(t)\!\!\uparrow$ but $T\nvdash``\boldsymbol\varphi_t(t)\!\!\uparrow\!\!"$.
\end{proof}

\noindent 
Of course if $T$ is sound (i.e., $\mathbb{N}\models T$) or even $\Sigma_1$-sound (i.e., if $\sigma\in\Sigma_1$ and $T\vdash\sigma$ then $\mathbb{N}\models\sigma$, cf.~\cite{isaac}) then  also  $T\nvdash``\boldsymbol\varphi_t(t)\!\!\downarrow\!\!"$, i.e., the sentence $``\boldsymbol\varphi_t(t)\!\!\uparrow\!\!"$ is (true and) independent from $T$. Let us note that the above proof did not explicitly specify $t\in\mathbb{N}$.

\bigskip 

 \begin{proof} (Constructive Proof): 
Since  $\mathcal{K}_T=\{n\in\mathbb{N}\mid T\vdash``\boldsymbol\varphi_n(n)\!\!\uparrow\!\!"\}$ is {\sc re} then $\mathcal{K}_T=\mathcal{W}_\texttt{t}$ for some $\texttt{t}\in\mathbb{N}$ which can be algorithmically computed from a description of the {\sc re} theory $T$. Now we show the truth of $``\boldsymbol\varphi_\texttt{t}(\texttt{t})\!\!\uparrow\!\!"$ as follows:

\begin{tabular}{llll}
$\boldsymbol\varphi_\texttt{t}(\texttt{t})\!\!\downarrow$ & $\Longrightarrow$ & $T\vdash``\boldsymbol\varphi_\texttt{t}(\texttt{t})\!\!\downarrow\!\!"$ & (by the $\Sigma_1$-completeness of $\textsl{\textsf{Q}}\subseteq T$)\\
 & $\Longrightarrow$ & $T\nvdash``\boldsymbol\varphi_\texttt{t}(\texttt{t})\!\!\uparrow\!\!"$ & (by the consistency of $T$)\\
 & $\Longrightarrow$ & $\texttt{t}\not\in \mathcal{K}_T$ & (by the definition of $\mathcal{K}_T$)\\
  & $\Longrightarrow$ & $\texttt{t}\not\in\mathcal{W}_\texttt{t}$ & (by $\mathcal{K}_T=\mathcal{W}_\texttt{t}$)\\
 & $\Longrightarrow$ & $\boldsymbol\varphi_\texttt{t}(\texttt{t})\!\!\uparrow$ & (by the definition of $\mathcal{W}_\texttt{t}$)
\end{tabular}

\noindent Thus, $\texttt{t}\not\in\mathcal{W}_\texttt{t}$ and so $\texttt{t}\not\in \mathcal{K}_T$ whence $T\nvdash``\boldsymbol\varphi_\texttt{t}(\texttt{t})\!\!\uparrow\!\!"$.
\end{proof}

\noindent 
Indeed, for any {\sc re} and consistent theory $T$($\supseteq\textsl{\textsf{Q}}$) and {\em any} $t$ with $\mathcal{W}_t=\mathcal{K}_T$ we have (by the above proof) that $\boldsymbol\varphi_t(t)\!\!\uparrow$ and $T\nvdash``\boldsymbol\varphi_t(t)\!\!\uparrow\!"$.
 Below we show that Kleene's (constructive) proof does not have the Rosser property.

\begin{theorem}[Kleene's Proof is not Rosserian]\label{nrk} For any given consistent and {\sc re} theory $T\supseteq\textsl{\textsf{Q}}$ there exists an {\sc re} and  consistent   theory $U\supseteq T$ such that $U\vdash``\boldsymbol\varphi_u(u)\!\!\downarrow\!\!"$ for some $u\in\mathbb{N}$ which satisfies $\mathcal{W}_u=\{n\in\mathbb{N}\mid U\vdash``\boldsymbol\varphi_n(n)\!\!\uparrow\!\!"\}$
\textup{(}and $\boldsymbol\varphi_u(u)\!\uparrow$\textup{)}.
\end{theorem}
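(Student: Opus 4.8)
The plan is to let $U$ be $T$ together with the single extra axiom $``\boldsymbol\varphi_u(u)\!\!\downarrow\!\!"$, where the index $u$ is chosen \emph{self-referentially} so that $\boldsymbol\varphi_u$ enumerates exactly the set $\{n : U\vdash``\boldsymbol\varphi_n(n)\!\!\uparrow\!\!"\}$ from the statement---the Kleene set of $U$ itself. Explicitly, I want $\boldsymbol\varphi_u$ to be the partial function that on input $n$ searches for a proof of $``\boldsymbol\varphi_n(n)\!\!\uparrow\!\!"$ in $U$ and halts precisely when one is found, where
\[
U \;=\; T + ``\boldsymbol\varphi_u(u)\!\!\downarrow\!\!".
\]
Since $U$ is built from $u$ while $\boldsymbol\varphi_u$ is defined through $U$, this is a genuine circularity, and the conceptually central first step is to break it with Kleene's recursion theorem. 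The map sending an index $e$ to an index for the program that, on input $n$, searches the theory $T+``\boldsymbol\varphi_e(e)\!\!\downarrow\!\!"$ for a proof of $``\boldsymbol\varphi_n(n)\!\!\uparrow\!\!"$ and halts iff one is found, is total computable (uniformly in a description of the {\sc re} theory $T$), and hence has a fixed point $u$. For this $u$ we obtain at once $\mathcal{W}_u=\mathrm{dom}(\boldsymbol\varphi_u)=\{n : U\vdash``\boldsymbol\varphi_n(n)\!\!\uparrow\!\!"\}$, the required side condition, while $U\vdash``\boldsymbol\varphi_u(u)\!\!\downarrow\!\!"$ holds trivially because that very sentence is an axiom of $U$.

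It then remains to establish the two substantive assertions, that $\boldsymbol\varphi_u(u)\!\!\uparrow$ and that $U$ is consistent. For the first I would argue by contradiction. If $\boldsymbol\varphi_u(u)\!\!\downarrow$, then this is a true $\Sigma_1$ fact, so the $\Sigma_1$-completeness of $\textsl{\textsf{Q}}\subseteq T$ yields $T\vdash``\boldsymbol\varphi_u(u)\!\!\downarrow\!\!"$; hence the adjoined axiom is already a theorem of $T$ and $U$ proves exactly what $T$ does. On the other hand, by the very definition of $\boldsymbol\varphi_u$, convergence at $u$ means a $U$-proof of $``\boldsymbol\varphi_u(u)\!\!\uparrow\!\!"$ was found, i.e.\ $U\vdash``\boldsymbol\varphi_u(u)\!\!\uparrow\!\!"$, whence $T\vdash``\boldsymbol\varphi_u(u)\!\!\uparrow\!\!"$. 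Thus $T$ would prove both $``\boldsymbol\varphi_u(u)\!\!\downarrow\!\!"$ and $``\boldsymbol\varphi_u(u)\!\!\uparrow\!\!"$, contradicting its consistency; therefore $\boldsymbol\varphi_u(u)\!\!\uparrow$.

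Consistency of $U$ then falls out of the same fixed-point equation almost for free: since $\boldsymbol\varphi_u(u)\!\!\uparrow$, we have $u\notin\mathrm{dom}(\boldsymbol\varphi_u)=\{n:U\vdash``\boldsymbol\varphi_n(n)\!\!\uparrow\!\!"\}$, so $U\nvdash``\boldsymbol\varphi_u(u)\!\!\uparrow\!\!"$; as an inconsistent theory proves every sentence, $U$ must be consistent. Together with the facts that $U$ is {\sc re}, contains $T$, proves the (false, since $\boldsymbol\varphi_u(u)\!\!\uparrow$) $\Sigma_1$ sentence $``\boldsymbol\varphi_u(u)\!\!\downarrow\!\!"$, and has $u$ as an index with $\mathcal{W}_u=\{n:U\vdash``\boldsymbol\varphi_n(n)\!\!\uparrow\!\!"\}$, this delivers the theorem. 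The step I expect to be most delicate to pin down is the recursion-theoretic fixed point, where the program computing $\boldsymbol\varphi_u$ must be allowed to name its own index $u$ inside the axiom $``\boldsymbol\varphi_u(u)\!\!\downarrow\!\!"$ of the very theory it searches; once that self-reference is legitimized the remainder is bookkeeping. The conceptual punchline is that $U$ stays consistent precisely because simple consistency, unlike $\Sigma_1$-soundness, tolerates the false $\Sigma_1$ theorem $``\boldsymbol\varphi_u(u)\!\!\downarrow\!\!"$, and it is exactly this slack that defeats the Rosser property for Kleene's sentence.
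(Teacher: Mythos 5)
Your proof is correct, and it reaches the same destination by a genuinely different fixed-point device. The paper first takes a total computable $\hbar$ with $\mathcal{W}_{\hbar(\psi)}=\{n\mid T+\psi\vdash``\boldsymbol\varphi_n(n)\!\uparrow\!"\}$ and then applies the \emph{arithmetical Diagonal Lemma} to get a sentence $\lambda$ with $\textsl{\textsf{Q}}\vdash\lambda\leftrightarrow``\boldsymbol\varphi_{\hbar(\lambda)}(\hbar(\lambda))\!\downarrow\!"$, setting $U=T+\lambda$ and $u=\hbar(\lambda)$; you instead place the self-reference entirely on the computability side, using \emph{Kleene's recursion theorem} on the map $e\mapsto$ (index of the proof-search program for $T+``\boldsymbol\varphi_e(e)\!\downarrow\!"$), so that your extra axiom is \emph{literally} the sentence $``\boldsymbol\varphi_u(u)\!\downarrow\!"$ rather than a sentence only $\textsl{\textsf{Q}}$-provably equivalent to it. This buys you a trivial verification of $U\vdash``\boldsymbol\varphi_u(u)\!\downarrow\!"$ and dispenses with the arithmetized fixed-point lemma (at the cost of legitimizing, via s-m-n and the recursion theorem, a program that names its own index inside the theory it searches --- which is standard, and is in fact the same tool the paper itself uses in its Chaitin section). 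Your consistency argument is also reorganized, to good effect: the paper supposes $U$ inconsistent and derives the contradiction $u\in\mathcal{W}_u$ versus $u\notin\mathcal{W}_u$ in one pass, leaving $\boldsymbol\varphi_u(u)\!\uparrow$ implicit in the theorem's parenthetical, whereas you first establish $\boldsymbol\varphi_u(u)\!\uparrow$ unconditionally --- correctly pushing the contradiction down into $T$ (whose consistency is known) by observing that if $\boldsymbol\varphi_u(u)\!\downarrow$ then the adjoined axiom is already a $T$-theorem by $\Sigma_1$-completeness, so $U$ and $T$ prove the same sentences --- and then read off consistency of $U$ from $u\notin\mathcal{W}_u$, since an inconsistent $U$ would prove $``\boldsymbol\varphi_u(u)\!\uparrow\!"$. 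One cosmetic caveat: the recursion theorem gives $\boldsymbol\varphi_u=\boldsymbol\varphi_{g(u)}$ only extensionally, so ``by the very definition of $\boldsymbol\varphi_u$'' should strictly read ``by the definition of the program with index $g(u)$, whose partial function $\boldsymbol\varphi_u$ equals''; the inference you draw from it (convergence at $u$ yields a $U$-proof of $``\boldsymbol\varphi_u(u)\!\uparrow\!"$) is unaffected.
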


\begin{proof}
There exists a computable (and total) function $\hbar$ such that for any sentence $\psi$ we have  $\mathcal{W}_{\hbar({\psi})}=\{n\in\mathbb{N}\mid T+\psi\vdash``\boldsymbol\varphi_n(n)\!\!\uparrow\!\!"\}$. By the Diagonal Lemma there exists a sentence $\lambda$ such that $\textsl{\textsf{Q}}\vdash \lambda\leftrightarrow ``\boldsymbol\varphi_{\hbar({\lambda})}
\big(\hbar({\lambda})\big)\!\!\downarrow\!\!"$. Clearly, for the theory  $U=T+\lambda$ and $u=\hbar({\lambda})$ we have $U\vdash``\boldsymbol\varphi_u(u)\!\!\downarrow\!\!"$ and $\mathcal{W}_u=\{n\in\mathbb{N}\mid U\vdash``\boldsymbol\varphi_n(n)\!\!\uparrow\!"\}$. It remains to show that $U$ is consistent: Otherwise,  $T\vdash\neg\lambda$ and so $T\vdash``\boldsymbol\varphi_u(u)\!\!\uparrow\!"$ which implies that $T+\lambda\vdash``\boldsymbol\varphi_u(u)\!\!\uparrow\!"$ whence $u\in\mathcal{W}_{\hbar(\lambda)}=\mathcal{W}_u$.
On the other hand $T\vdash``\boldsymbol\varphi_u(u)\!\!\uparrow\!"$ implies that $\boldsymbol\varphi_u(u)\!\!\uparrow$ holds (since otherwise $\boldsymbol\varphi_u(u)\!\!\downarrow$ by the $\Sigma_1-$completeness would imply $T\vdash``\boldsymbol\varphi_u(u)\!\!\downarrow\!\!"$ contradicting the consistency of $T$) and so
$u\not\in\mathcal{W}_u$; a contradiction.
\end{proof}

\noindent 
Summing up, for any consistent and {\sc re} extension $T$ of $\textsl{\textsf{Q}}$ we have $T\nvdash``\boldsymbol\varphi_{t}({t})\!\!\uparrow\!\!"$ and  $\boldsymbol\varphi_{t}({t})\!\!\uparrow$ for any ${t}$ which satisfies $\mathcal{W}_{t}=\{n\in\mathbb{N}\mid T\vdash``\boldsymbol\varphi_n(n)\!\!\uparrow\!\!"\}$. Moreover, if $T$ is $\Sigma_1$-sound then $\boldsymbol\varphi_{t}({t})\!\!\uparrow$ is independent from $T$ (i.e., we also have  $T\nvdash``\boldsymbol\varphi_{t}({t})\!\!\downarrow\!\!"$). However, if the theory $T$ is not $\Sigma_1$-sound then for some $e$ with $\mathcal{W}_e=\{n\in\mathbb{N}\mid T\vdash``\boldsymbol\varphi_n(n)\!\!\uparrow\!"\}$ the sentence  $\boldsymbol\varphi_e(e)\!\!\uparrow$ might not be independent from $T$ (and its negation could be provable in $T$, that is $T\vdash``\boldsymbol\varphi_{e}({e})\!\!\downarrow\!\!"$).

\noindent 
Albert Visser has informed the authors that for any {\sc re} and consistent theory $T$ which is sufficiently strong (see e.g. the explanations before Theorem~\ref{nrobo} below) there exists some $\vartheta$ with $\mathcal{W}_\vartheta=\{n\in\mathbb{N}\mid T\vdash``\boldsymbol\varphi_n(n)\!\!\uparrow\!"\}$ such that (beside $\boldsymbol\varphi_{\vartheta}({\vartheta})\!\!\uparrow$ and $T\nvdash
``\boldsymbol\varphi_{\vartheta}({\vartheta})\!\!\uparrow\!\!"$ we also have) $T\nvdash``
\boldsymbol\varphi_{\vartheta}({\vartheta})\!\!\downarrow\!\!"$, or in the other words the sentence $\boldsymbol\varphi_\vartheta(\vartheta)\!\!\uparrow$ is independent from $T$; moreover $\vartheta$ can be algorithmically computed from a given description of the {\sc re} theory $T$. The proof of this Rosserian version of Kleene's proof is rather involved and will appear in a future paper.
Let us note that a Rosserian version of this beautiful theorem of Kleene appeared in \cite{kleene2} (see also \cite{kleene3}) where Kleene calls it  ``a symmetric form'' of G\"odel's (incompleteness) theorem (also see \cite{salehi} for a modern treatment).

\section{Chaitin's Proof for G\"odel's Incompleteness Theorem}
There are various versions of Chaitin's proof for the incompleteness theorem \cite{chai}, which is sometimes called ``Chaitin's incompleteness theorem''; this proof appears in e.g.\  \cite{davis,lambal,bek,stillwell}. We consider the version presented  in \cite{bek}.
\begin{definition}[Kolmogorov-Chaitin Complexity] For any natural number $m$ let $\mathscr{K}(m)=\min \{i\in\mathbb{N}\mid \boldsymbol\varphi_i(0)\!\!\downarrow =m\}$.
\hfill \ding{71}\end{definition}

\noindent 
The function $\mathscr{K}$ is total and for any $e\in\mathbb{N}$ there are finitely many $m$'s which satisfy  $\mathscr{K}(m)\leqslant e$. The following is Lemma~7 of~\cite{bek}.
\begin{lemma}[Uncomputability of Complexity]\label{mainlemma}
There is no computable function $f$ which satisfies $\mathscr{K}\big(f(m)\big)> m$ for all $m\in\mathbb{N}$.
\end{lemma}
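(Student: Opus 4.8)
The plan is to argue by contradiction, using the self-referential power of Kleene's recursion (fixed-point) theorem to exhibit a single index that describes a number whose complexity exceeds the index itself. So suppose, toward a contradiction, that $f$ is a (necessarily total) computable function with $\mathscr{K}\big(f(m)\big)>m$ for every $m\in\mathbb{N}$.

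First I would record the trivial but crucial upper bound: if $i$ is \emph{any} index with $\boldsymbol\varphi_i(0)\!\downarrow\,=m$, then $\mathscr{K}(m)\leqslant i$, directly from the definition of $\mathscr{K}$ as a minimum over such indices. The goal is therefore to manufacture, out of $f$, an index $e$ satisfying $\boldsymbol\varphi_e(0)\!\downarrow\,=f(e)$; for such an $e$ the bound gives $\mathscr{K}\big(f(e)\big)\leqslant e$, in flat contradiction with the standing assumption $\mathscr{K}\big(f(e)\big)>e$.

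To build this $e$, I would first apply the s-m-n theorem to the computable two-place map $(n,x)\mapsto f(n)$ (which simply ignores $x$), obtaining a total computable function $s$ with $\boldsymbol\varphi_{s(n)}(0)=f(n)$ for all $n$. Here I would pause to flag why the naive route fails, since this is the conceptual crux: s-m-n by itself yields only $\mathscr{K}\big(f(n)\big)\leqslant s(n)$, and as nothing forces $s(n)\leqslant n$ this is perfectly consistent with $\mathscr{K}\big(f(n)\big)>n$. The remedy is to turn the index back on itself by diagonalization: applying the recursion theorem to $s$ produces a fixed point $e$ with $\boldsymbol\varphi_e=\boldsymbol\varphi_{s(e)}$, and in particular $\boldsymbol\varphi_e(0)=\boldsymbol\varphi_{s(e)}(0)=f(e)$.

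With this $e$ in hand the contradiction is immediate: $e$ is itself an index computing $f(e)$ on input $0$, so the upper bound gives $\mathscr{K}\big(f(e)\big)\leqslant e$, whereas the hypothesis forces $\mathscr{K}\big(f(e)\big)>e$. Hence no such $f$ can exist. The one step I expect to demand genuine care is recognizing that the s-m-n estimate is too weak on its own and must be fused with a fixed point; once the recursion theorem is invoked to pin the index down to $e$, the remaining inequalities are purely formal.
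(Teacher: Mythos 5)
Your proposal is correct and follows essentially the same route as the paper: both apply Kleene's second recursion theorem to obtain an index $e$ with $\boldsymbol\varphi_e(0)=f(e)$, whence $\mathscr{K}\big(f(e)\big)\leqslant e$ contradicts $\mathscr{K}\big(f(e)\big)>e$. Your explicit s-m-n step merely unpacks what the paper's direct invocation of the recursion theorem leaves implicit, so the two arguments are the same.
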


\begin{proof}
If there were such a computable function $f$, then by Kleene's second recursion theorem there would exist some $e$ such that $\boldsymbol\varphi_e(x)=f(e)$ and so, 
$\boldsymbol\varphi_e(0)=f(e)$ which implies $\mathscr{K}\big(f(e)\big)\leqslant e$; a  contradiction.
\end{proof}

\noindent 
So,   $\mathscr{K}$ is not computable, since otherwise   $f(x)=\min \{y \mid \mathscr{K}(y)>x\}$, which satisfies $\forall x: \mathscr{K}\big(f(x)\big)> x$, would be computable.
\begin{theorem}[Chaitin's Theorem]\label{chth}
For any consistent  {\sc re} theory $T$ which contains $\textsl{\textsf{Q}}$ there exists a constant $c_T\in\mathbb{N}$ such that for any $e\geqslant c_T$ and any $w\in\mathbb{N}$ we have $T\nvdash``\mathscr{K}(w)>e"$.
\end{theorem}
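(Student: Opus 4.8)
The plan is to argue by contradiction, manufacturing from the failure of the theorem a computable function that violates Lemma~\ref{mainlemma}. So suppose no such constant $c_T$ exists. Then for every $c\in\mathbb{N}$ there are some $e\geqslant c$ and some $w\in\mathbb{N}$ with $T\vdash``\mathscr{K}(w)>e"$; equivalently, the set of exponents $e$ for which $T$ proves an inequality $``\mathscr{K}(w)>e"$ (for some witness $w$) is infinite, i.e.\ unbounded.

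First I would define a function $f$ by an unbounded proof search. On input $m$, enumerate the theorems of $T$ (which is possible since $T$ is {\sc re}) and halt at the first theorem of the syntactic shape $``\mathscr{K}(w)>e"$ with $e\geqslant m$, returning that $w$ as the value $f(m)$. By the previous paragraph such a theorem always turns up, so the search terminates and $f$ is total and computable.

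The crux is to check that $f$ outruns complexity, namely $\mathscr{K}\big(f(m)\big)>m$ for every $m$, using \emph{only} the consistency of $T$. Fix $m$, put $w=f(m)$, and let $e\geqslant m$ be the exponent found, so that $T\vdash``\mathscr{K}(w)>e"$. If we had $\mathscr{K}(w)\leqslant m\,(\leqslant e)$, then $``\mathscr{K}(w)\leqslant e"$ would be a true $\Sigma_1$ sentence (it asserts $\exists i\leqslant e\colon\boldsymbol\varphi_i(0)\!\!\downarrow\!=w$), hence provable in $\textsl{\textsf{Q}}\subseteq T$ by $\Sigma_1$-completeness; together with the provable negation $``\mathscr{K}(w)>e"$ this contradicts the consistency of $T$. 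Therefore $\mathscr{K}\big(f(m)\big)=\mathscr{K}(w)>m$.

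This is precisely the configuration forbidden by Lemma~\ref{mainlemma}, so the assumption was false and the constant $c_T$ must exist. I expect the only delicate point to be the step that converts provability of the $\Pi_1$ inequality $``\mathscr{K}(w)>e"$ into a genuine lower bound on $\mathscr{K}(w)$ \emph{without} assuming any soundness of $T$: the whole weight rests on the observation that the competing statement $``\mathscr{K}(w)\leqslant e"$ is $\Sigma_1$, so bare consistency (through the $\Sigma_1$-completeness of $\textsl{\textsf{Q}}$) already rules out the bad case. One should also verify that the enumeration recognizes the intended syntactic form of the sentences $``\mathscr{K}(w)>e"$ uniformly in $w$ and $e$, but that is routine bookkeeping.
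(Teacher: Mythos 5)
Your proposal is correct and follows essentially the same route as the paper's own proof: contradiction, a proof-search defining a total computable $f$ with $\mathscr{K}\big(f(m)\big)>m$, the key step that consistency plus the $\Sigma_1$-completeness of $\textsl{\textsf{Q}}$ converts $T\vdash``\mathscr{K}(w)>e"$ into a true lower bound, and finally the clash with Lemma~\ref{mainlemma}. The one point you flag as delicate is exactly the point the paper makes, so nothing further is needed.
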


\begin{proof} If not, then for any given $m\in\mathbb{N}$ there exists some $e\geqslant m$ and some $w$ such that $T\vdash``\mathscr{K}(w)>e"$. Let us note that if $T\vdash``\mathscr{K}(w)>e"$ for a consistent $T\supseteq\textsl{\textsf{Q}}$ then $\mathscr{K}(w)>e$, since otherwise, if $\mathscr{K}(w)\leqslant e$, the true $\Sigma_1$-sentence $``\mathscr{K}(w)\leqslant e"$ would be provable in $\textsl{\textsf{Q}}$ (and so in $T$) which contradicts the consistency of $T$. Now, for a given $m$ we can, by an algorithmic proof search in $T$, find some $e\geqslant m$ and $w$ such that $T\vdash``\mathscr{K}(w)>e"$ (and so $\mathscr{K}(w)>e$); our assumption guarantees the termination of this algorithm for any input $m$. Let $f(m)$ be one of those $w$'s; then 
$\mathscr{K}\big(f(m)\big)>e\geqslant m$   which contradicts   Lemma~\ref{mainlemma}.
\end{proof}

\noindent 
This is an incompleteness theorem since for any $c$ there are cofinitely many $w$'s with $\mathscr{K}(w)>c$. So, for a given $T$ which is consistent and {\sc re} and contains $\textsl{\textsf{Q}}$ there are cofinitely many $w$'s such that the true sentences $``\mathscr{K}(w)>c_T"$ are unprovable in $T$. As for the constructivity of this proof, the good news is that a constant $c_T$ which satisfies Chaitin's Theorem (\ref{chth}) can be algorithmically constructed from $T$.
\begin{theorem}[Computing a Chaitin Constant]\label{thct}
For a given consistent and {\sc re} extension $T$ of $\textsl{\textsf{Q}}$ one can algorithmically construct a constant $c_T$ such that for all $e\geqslant c_T$ and all $w$, we have $T\nvdash``\mathscr{K}(w)>e"$.
\end{theorem}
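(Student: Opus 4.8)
The plan is to make the non-constructive proof of Theorem~\ref{chth} effective by applying Kleene's second recursion theorem uniformly, exactly as it was used in Lemma~\ref{mainlemma}. First, from a (finite) description of the {\sc re} theory $T$ I would build a total computable function $h$ such that, for each index $i$, the function $\boldsymbol\varphi_{h(i)}$ ignores its argument and performs a dovetailed proof-search in $T$, looking for a pair $(e,w)$ with $e\geqslant i$ and $T\vdash``\mathscr{K}(w)>e"$, and outputs the first such $w$ it encounters (so that $\boldsymbol\varphi_{h(i)}$ is defined on all inputs if such a pair exists, and is nowhere defined otherwise). The map $i\mapsto h(i)$ is computable uniformly in a description of $T$, since it merely assembles a fixed search routine with the parameter $i$ hard-wired in.

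Next, I would invoke the effective (parametrized) form of Kleene's second recursion theorem to obtain, algorithmically from an index for $h$ and hence algorithmically from $T$, a fixed point $e_0$ with $\boldsymbol\varphi_{e_0}=\boldsymbol\varphi_{h(e_0)}$. I would then set $c_T=e_0$ and verify that it is a Chaitin constant. Suppose, toward a contradiction, that $T\vdash``\mathscr{K}(w)>e"$ for some $w$ and some $e\geqslant e_0$. Then the search carried out by $\boldsymbol\varphi_{e_0}$ (which looks for $(e,w)$ with $e\geqslant e_0$) terminates and returns some $w_0$ with $T\vdash``\mathscr{K}(w_0)>e'"$ for some $e'\geqslant e_0$; in particular $\boldsymbol\varphi_{e_0}(0)\!\!\downarrow\,=w_0$, so by the definition of $\mathscr{K}$ we get $\mathscr{K}(w_0)\leqslant e_0$. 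On the other hand, the $\Sigma_1$-completeness/consistency argument already used in the proof of Theorem~\ref{chth} shows that $T\vdash``\mathscr{K}(w_0)>e'"$ implies $\mathscr{K}(w_0)>e'\geqslant e_0$, which is a contradiction. Hence no such $e\geqslant e_0$ exists, i.e.\ $T\nvdash``\mathscr{K}(w)>e"$ for all $e\geqslant c_T$ and all $w$, as required.

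The one point that needs care---and which I expect to be the main obstacle---is the uniformity and effectivity of the construction: the bare recursion theorem only asserts the existence of a fixed point, whereas here the fixed-point index $e_0=c_T$ must be produced by an algorithm that takes a description of the {\sc re} theory $T$ as its input. This is resolved by using the parametrized recursion theorem, whose fixed point is computed from an index for $h$, together with the observation that $h$ itself is obtained uniformly from $T$. Once these two effectivity facts are spelled out, the whole construction of $c_T$ is genuinely algorithmic in $T$, which is precisely the constructive content claimed by the theorem.
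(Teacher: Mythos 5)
Your proposal is correct and takes essentially the same route as the paper's proof: a uniform proof search for theorems of the form $``\mathscr{K}(w)>e"$ with $e$ at least the parameter, followed by the effective (parametrized) form of Kleene's second recursion theorem to obtain the fixed-point index $c_T$ algorithmically from $T$, and the same $\Sigma_1$-completeness-plus-consistency contradiction. The only cosmetic differences are that the paper packages the search as a two-variable partial function $\hbar(x,y)$ and diagonalizes on its first component, and it derives the final contradiction inside $T$ (provable $\mathscr{K}(a)\leqslant c_T$ against provable $\mathscr{K}(a)>c_T$) rather than at the meta-level as you do; both variants are interchangeable.
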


\begin{proof}
Given a description of a consistent, $\Sigma_1$-complete and {\sc re} theory $T$ the following can be done algorithmically. Define $\hbar(x,y)$ to be the first   ordered pair    $\langle a,b\rangle$ such that the proof search algorithm of $T$ shows up (a proof of) the sentence $``\mathscr{K}(a)>b\geqslant x"$ (so, $T\vdash ``\mathscr{K}(a)>b\geqslant x"$).  This is (a partially) computable (function) and an index of it  can be calculated from (a description of) $T$.  By  Kleene's second recursion theorem there exists a constant $c$ such that $\boldsymbol\varphi_c(y)=\hbar_1(c,y)$, where $\hbar_1(x,y)$ is the first component of the ordered pair $\hbar(x,y)$. The constant $c$ can be computed from $T$; let us denote it by $c_T$. Now, we show that for no $b\geqslant c_T$ and no $a$ can $T\vdash``\mathscr{K}(a)>b"$ hold. If there exists such $a$ and $b$ then we have $T\vdash``\mathscr{K}(a)>b\geqslant c_T"$. If $\langle a,b\rangle$ is the first ordered  pair such that $``\mathscr{K}(a)>b\geqslant c_T"$ appears in the above mentioned
proof search algorithm of $T$,
  then $\hbar(c_T,0)=\langle a,b\rangle$ and so $\boldsymbol\varphi_{c_T}(0)=\hbar_1(c_T,0)=a$. Thus, $\mathscr{K}(a)\leqslant c_T$, and by the $\Sigma_1$-completeness of $T$ we have $T\vdash``\mathscr{K}(a)\leqslant c_T"$. But from $T\vdash``\mathscr{K}(a)>b\geqslant c_T"$ we have $T\vdash``\mathscr{K}(a)>c_T"$,   contradicting the consistency of $T$.
\end{proof}

\noindent 
Unfortunately, by Lemma~\ref{mainlemma} one cannot calculate a $w$ with $\mathscr{K}(w)>c_T$ given $c_T$ for a theory $T$. Otherwise one could get a constructive version of Chaitin's proof: Given a consistent and {\sc re} theory $T\supseteq\textsl{\textsf{Q}}$ one calculates $c_T$ and finds some $w$ with $\mathscr{K}(w)>c_T$; then $``\mathscr{K}(w)>c_T"$ is a true sentence which is not provable in $T$. It is actually known that Chaitin's proof is not constructive; see e.g.\ \cite[page~1394]{lambal} or  \cite[page~95]{stillwell}.

\begin{theorem}[Non-Constructivity of Chaitin's Proof]\label{unconch}
There is no algorithm such that for a given consistent and {\sc re} extension $T$ of $\textsl{\textsf{Q}}$ can compute some $w_T$ such that both $T\nvdash``\mathscr{K}(w_T)>c_T"$ and
$\mathscr{K}(w_T)>c_T$ holds, where $c_T$ is a Chaitin constant as in Theorem~\ref{thct}.
\end{theorem}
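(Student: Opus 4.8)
The plan is to argue by contradiction and reduce to Lemma~\ref{mainlemma}. Suppose there were an algorithm $A$ which, given a description of a consistent and {\sc re} theory $T\supseteq\textsl{\textsf{Q}}$, outputs a number $w_T$ with $\mathscr{K}(w_T)>c_T$ (so that, by Theorem~\ref{chth}, automatically $T\nvdash``\mathscr{K}(w_T)>c_T"$ as well), where $c_T$ is a Chaitin constant for $T$ as produced in Theorem~\ref{thct}. I would then manufacture a total computable function $f$ satisfying $\mathscr{K}\big(f(m)\big)>m$ for all $m\in\mathbb{N}$, which is exactly what Lemma~\ref{mainlemma} forbids. The link between the two is immediate: if, for a given $m$, I can hand $A$ a theory whose associated Chaitin constant is at least $m$, then the output $w$ of $A$ already satisfies $\mathscr{K}(w)>c_T\geqslant m$, so setting $f(m):=w$ does the job.

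The whole difficulty is therefore to produce, uniformly and computably in $m$, a consistent {\sc re} extension of $\textsl{\textsf{Q}}$ whose Chaitin constant is guaranteed to be $\geqslant m$. Here I would keep the single theory $T=\textsl{\textsf{Q}}$ (which is consistent, {\sc re} and $\Sigma_1$-complete) and exploit that the property of being a Chaitin constant is \emph{upward closed}: if for all $e\geqslant c$ and all $w$ one has $T\nvdash``\mathscr{K}(w)>e"$, then the same holds for every $c'\geqslant c$, simply because the range of quantified $e$'s shrinks. Consequently, computing one Chaitin constant $c_{\textsl{\textsf{Q}}}$ for $\textsl{\textsf{Q}}$ by Theorem~\ref{thct}, the value $\max(c_{\textsl{\textsf{Q}}},m)$ is again a Chaitin constant for $\textsl{\textsf{Q}}$, is $\geqslant m$, and is computable from $m$.

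With this in hand the reduction closes: define $f(m)$ to be the number $A$ returns when it is run on $\textsl{\textsf{Q}}$ together with the Chaitin constant $\max(c_{\textsl{\textsf{Q}}},m)\geqslant m$. Then $f$ is total and computable, and $\mathscr{K}\big(f(m)\big)>\max(c_{\textsl{\textsf{Q}}},m)\geqslant m$ for every $m$, contradicting Lemma~\ref{mainlemma}; hence no such $A$ exists. If one prefers to read Theorem~\ref{unconch} so that $A$ reads only a description of $T$ and recomputes its \emph{own} constant $c_T$ internally, then in place of changing the constant I would feed $A$, for each $m$, a suitably padded description $d_m$ of $\textsl{\textsf{Q}}$ on which the construction of Theorem~\ref{thct} returns a Chaitin constant $\geqslant m$; this is arranged by invoking, inside that construction, the form of Kleene's second recursion theorem that supplies a fixed point above any prescribed bound, the very same fixed-point argument of Theorem~\ref{thct} still showing that the resulting large constant is a valid Chaitin constant.

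The one genuinely delicate point—and the step I expect to be the main obstacle—is precisely the guarantee $c_T\geqslant m$ obtained \emph{computably} in $m$. One cannot achieve this ``honestly'', by adjoining to $\textsl{\textsf{Q}}$ a true axiom of the form $``\mathscr{K}(a)>b"$ with $b\geqslant m$, because naming such a high-complexity witness $a$ is exactly what Lemma~\ref{mainlemma} rules out. The escape is that a large Chaitin constant need not be witnessed by any theorem of $T$ at all: by upward closure (equivalently, by selecting a large fixed point in the proof of Theorem~\ref{thct}) the constant can be pushed above $m$ for free, without the theory proving anything new. Once this is granted, the remaining bookkeeping is routine and the contradiction with Lemma~\ref{mainlemma} is immediate.
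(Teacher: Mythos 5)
Your overall target---a reduction to Lemma~\ref{mainlemma} via inputs whose Chaitin constant is at least $m$---is exactly the shape of the paper's alternative proof (due to Visser), and you correctly isolated the crux: obtaining $c\geqslant m$ computably in $m$. But your mechanism for the crux is where the argument breaks. In your primary version the algorithm $A$ receives the constant as an extra input; then the theory plays no role at all (you keep $T=\textsl{\textsf{Q}}$ throughout and vary only the constant), and what you prove is a two-line restatement of Lemma~\ref{mainlemma}, not Theorem~\ref{unconch}: the content of the theorem is precisely that the witness cannot be computed even though $c_T$ is a fixed computable function of $T$ (the one constructed in Theorem~\ref{thct}), not a freely choosable parameter. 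Upward closure of the set of valid constants shows that \emph{some} valid constant $\geqslant m$ exists---trivially---but it does not show that the constant which Theorem~\ref{thct}'s algorithm assigns to your input is $\geqslant m$, and only the latter matters once $A$'s input is just a description of $T$.

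Your fallback---padding descriptions $d_m$ of $\textsl{\textsf{Q}}$ and invoking, ``inside that construction,'' the bounded-below recursion theorem---concedes the point: you are replacing the constant-producing algorithm of Theorem~\ref{thct} by a bespoke one (which must somehow read the bound $m$ off the padded description), so at best you prove that for \emph{that modified} constant function no witness algorithm exists. For the fixed algorithm of Theorem~\ref{thct}, nothing guarantees that padded descriptions of the \emph{same} theory $\textsl{\textsf{Q}}$ yield large constants; that would hinge on implementation details of the numbering (monotonicity of the s-m-n function, which fixed point the recursion-theorem construction returns) that Theorem~\ref{thct} never asserts. The paper obtains $c\geqslant m$ by a different, implementation-free device that your proposal is missing: strengthen the \emph{theory} rather than its description. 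Setting $T_0=\textsl{\textsf{Q}}$ and $T_{i+1}=T_i+``\mathscr{K}(w_{T_i})>c_{T_i}"$, the new axiom is provable in $T_{i+1}$, so \emph{any} valid Chaitin constant for $T_{i+1}$---in particular $c_{T_{i+1}}$, whatever the algorithm of Theorem~\ref{thct} happens to output---must strictly exceed $c_{T_i}$; hence $c_{T_m}\geqslant m$, and $m\mapsto w_{T_m}$ is a computable function contradicting Lemma~\ref{mainlemma} (the paper's first proof instead runs the tower up to $T_\infty$ and contradicts Theorem~\ref{chth} directly). That iteration step is the missing idea: with it your reduction closes for the actual $c_T$ of Theorem~\ref{thct}; without it your argument proves either a trivialized statement or a statement about a different constant function.
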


\begin{proof}
If such a $w_T$ were computable from $T$, then
the theory $T_\infty=\bigcup_{i\in\mathbb{N}} T_i$ would be {\sc re} where $T_0=\textsl{\textsf{Q}}$ and inductively $T_{i+1}=T_i+``\mathscr{K}(w_{T_i})>c_{T_i}"$ are defined
by iterating the computation procedure. The theory $T_\infty$ is also consistent (indeed, sound) and contains $\textsl{\textsf{Q}}$, so by Chaitin's Theorem (\ref{chth}) there should exist some constant $c_{T_\infty}$ such that for no $w$ can we have the deduction  $T_\infty\vdash``\mathscr{K}(w)>c_{T_\infty}"$. But this is a contradiction  because we have  $c_{T_i}<c_{T_{i+1}}$ and also $c_{T_i}<c_{T_{\infty}}$ for all $i\in\mathbb{N}$.
\end{proof}

\begin{proof} (An Alternative Proof) 
 Albert Visser suggested the following argument as another proof of Theorem~\ref{unconch}: Since the sequence $\{c_{T_i}\}_{i\in\mathbb{N}}$ is strictly increasing, we have that  $c_{T_m}\geqslant m$ for any $m\in\mathbb{N}$. Now, $\forall m\in\mathbb{N}:\mathscr{K}(w_{T_m})>c_{T_m}\geqslant m$ would contradict Lemma~\ref{mainlemma} if $w_T$ were computable from $T$.
\end{proof}

\begin{remark}\label{referee}{\rm 
Albert Visser noted that Theorems~\ref{thct} and~\ref{unconch} amusingly imply Lemma~\ref{mainlemma}, since if there were a computable (total) function $f$ with $\forall m\in\mathbb{N}: \mathscr{K}\big(f(m)\big)>m$ then one could take $w_T$  as $f(c_T)$.
}\hfill \ding{71}
\end{remark}
\noindent 
The true unprovable sentences $``\mathscr{K}(w)>e"$ (for $e\geqslant c_T$) are also independent when $T$ is a ($\Sigma_1$-)sound theory: If $T\vdash``\mathscr{K}(w)\leqslant e"$ then the $\Sigma_1$-sentence $\mathscr{K}(w)\leqslant e$ has to be true, a contradiction. So, we restate Chaitin's Theorem as

\begin{corollary}[Chaitin's Theorem, restated]\label{corch}
Let $T$ be
 a $\Sigma_1$-sound and {\sc re} theory such that   $T\supseteq\textsl{\textsf{Q}}$.  There exists some $c_T$ \textup{(}which is computable from $T$\textup{)} such that for any $e\geqslant c_T$ there are cofinitely many $w$'s such that $``\mathscr{K}(w)>e"$ is independent from $T$.\hfill \ding{113}
\end{corollary}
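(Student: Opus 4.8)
The plan is to assemble the corollary directly from Chaitin's Theorem~(\ref{chth}), its effective version~(\ref{thct}), and the $\Sigma_1$-soundness hypothesis, using only the elementary fact that $\mathscr{K}$ assumes each value below a fixed bound finitely often. First I would observe that $\Sigma_1$-soundness entails (simple) consistency, so both Theorem~\ref{chth} and Theorem~\ref{thct} apply to $T$; together they supply a constant $c_T$, computable from a description of the {\sc re} theory $T$, with the property that $T\nvdash``\mathscr{K}(w)>e"$ for every $w\in\mathbb{N}$ and every $e\geqslant c_T$. This already discharges one half of independence, uniformly in $w$.

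Next I would fix an arbitrary $e\geqslant c_T$ and isolate the cofinitely many witnesses. Since $\mathscr{K}$ is total and the set $\{m\mid\mathscr{K}(m)\leqslant e\}$ is finite, its complement $\{w\mid\mathscr{K}(w)>e\}$ is cofinite, and these are exactly the $w$ for which $``\mathscr{K}(w)>e"$ is true. For each such $w$ I would rule out provability of the negation: if $T\vdash``\mathscr{K}(w)\leqslant e"$, then, because $``\mathscr{K}(w)\leqslant e"$ is a $\Sigma_1$-sentence and $T$ is $\Sigma_1$-sound, $\mathscr{K}(w)\leqslant e$ would have to hold, contradicting $\mathscr{K}(w)>e$. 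Hence $T\nvdash``\mathscr{K}(w)\leqslant e"$ as well, which is the remaining half of independence.

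Combining the two non-derivabilities, for each of the cofinitely many $w$ with $\mathscr{K}(w)>e$ the sentence $``\mathscr{K}(w)>e"$ is true while neither it nor its negation is provable in $T$; that is, it is independent from $T$. Since $e\geqslant c_T$ was arbitrary, this yields the statement for every admissible $e$, and the clause ``computable from $T$'' is inherited verbatim from Theorem~\ref{thct}.

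I do not expect a genuine obstacle here, as all the mathematical content has been pushed into Theorems~\ref{chth} and~\ref{thct}, and the corollary amounts to bookkeeping. The only points demanding care are orienting the independence argument correctly---the unprovability of $``\mathscr{K}(w)>e"$ comes from Chaitin's Theorem and holds for \emph{all} $w$, whereas the unprovability of its negation is precisely where $\Sigma_1$-soundness and the restriction to the cofinite set of \emph{true} instances enter---and checking that ``cofinitely many'' is being asserted separately for each fixed $e$, which is exactly what the finiteness of $\{m\mid\mathscr{K}(m)\leqslant e\}$ delivers.
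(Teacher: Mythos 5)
Your proposal is correct and takes essentially the same route as the paper, whose entire justification is the one-line remark preceding the corollary: Theorem~\ref{chth} (with the computable $c_T$ of Theorem~\ref{thct}) gives $T\nvdash``\mathscr{K}(w)>e"$, and $\Sigma_1$-soundness rules out $T\vdash``\mathscr{K}(w)\leqslant e"$ for the cofinitely many $w$ with $\mathscr{K}(w)>e$, since $``\mathscr{K}(w)\leqslant e"$ is a false $\Sigma_1$-sentence. Your version just spells out the bookkeeping (that $\Sigma_1$-soundness implies consistency, and that cofiniteness is asserted per fixed $e$) that the paper leaves implicit.
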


\noindent 
For  a Rosserian version of Chaitin's Theorem   the assumption of the ``$\Sigma_1$-soundness'' (of $T$) in Corollary~\ref{corch} should be replaced  with (its simple) ``consistency''. For doing that we need the following  version of the Pigeonhole Principle in $\textsl{\textsf{Q}}$ (which  holds in  $\textsl{\textsf{R}}$ too).

\begin{lemma}[A Pigeonhole Principle]\label{lemphp}
For any $k\in\mathbb{N}$ we have
$$\textsl{\textsf{Q}}\vdash\forall z_0,\cdots,z_k\Big( \quad   \bigwedge\hspace{-4ex}\bigwedge_{0\leqslant i\leqslant k}z_i<\overline{k}\quad\longrightarrow
\quad\bigvee\hspace{-4.75ex}\bigvee_{0\leqslant i,j\leqslant k}^{i\neq j}z_i=z_j\Big).$$
\end{lemma}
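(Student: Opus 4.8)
The plan is to prove the statement
$$\textsl{\textsf{Q}}\vdash\forall z_0,\cdots,z_k\Big(\bigwedge_{0\leqslant i\leqslant k}z_i<\overline{k}\longrightarrow\bigvee_{0\leqslant i,j\leqslant k}^{i\neq j}z_i=z_j\Big)$$
by induction on $k$ in the metatheory, producing for each fixed $k$ an explicit $\textsl{\textsf{Q}}$-proof of the displayed (quantifier-bounded, finite) formula. The key fact I would rely on is that $\textsl{\textsf{Q}}$, although it does not prove general induction, does prove all the requisite facts about concrete (standard) numbers and about bounded comparisons: for each fixed standard $n$, $\textsl{\textsf{Q}}\vdash\forall x\,(x<\overline{n}\leftrightarrow\bigvee_{j<n}x=\overline{j})$. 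This ``bounded-below'' lemma is the engine of the whole argument, since it lets me replace the hypothesis $z_i<\overline{k}$ by a finite disjunction of equalities $z_i=\overline{0}\lor\cdots\lor z_i=\overline{k-1}$, after which the pigeonhole conclusion becomes pure equational reasoning among the standard numerals $\overline{0},\dots,\overline{k-1}$.

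The steps, in order, are as follows. First I would establish (or cite from $\cite{tmr}$ or $\cite{smith}$) the case-analysis lemma above: reasoning inside $\textsl{\textsf{Q}}$, each $z_i<\overline{k}$ forces $z_i$ to equal one of the $k$ numerals $\overline{0},\dots,\overline{k-1}$. Second, I would argue that we then have $k+1$ variables $z_0,\dots,z_k$ each provably equal to one of only $k$ distinct numerals; since $\textsl{\textsf{Q}}$ proves $\overline{a}\neq\overline{b}$ for distinct standard $a,b$, a finite and purely propositional case split over the (finitely many) assignments of the $z_i$'s to numerals shows that in every case two of the variables are assigned the same numeral, hence $\textsl{\textsf{Q}}$ proves $z_i=z_j$ for some $i\neq j$. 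Third, I would assemble these finitely many cases into one $\textsl{\textsf{Q}}$-derivation of the disjunction $\bigvee_{i\neq j}z_i=z_j$ and then discharge the universal quantifiers by generalization, obtaining the displayed sequent.

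The main obstacle I anticipate is purely expository rather than mathematical: making precise that the entire pigeonhole combinatorics is carried out \emph{externally} (in the metatheory, for each fixed standard $k$) and only the resulting finite propositional tautology is verified \emph{internally} in $\textsl{\textsf{Q}}$. One must be careful that $\textsl{\textsf{Q}}$ is too weak to prove the pigeonhole principle with a variable (non-standard) bound $z$ in place of a numeral $\overline{k}$ — the statement is a schema, one instance per standard $k$, not a single $\textsl{\textsf{Q}}$-theorem $\forall z\,(\cdots)$. The delicate point is thus the reduction in the first step from the bounded hypothesis $z_i<\overline{k}$ to the finite disjunction of numeral-equalities; once that reduction is in place, everything else is a finite (if combinatorially explosive, being over $k^{k+1}$ assignments) but routine propositional check using only the basic numeral facts $\textsl{\textsf{Q}}\vdash\overline{a}\neq\overline{b}$ for $a\neq b$. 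I would remark that exactly the same argument goes through in $\textsl{\textsf{R}}$, since $\textsl{\textsf{R}}$ likewise proves each instance of the bounded case-analysis lemma and each numeral inequality.
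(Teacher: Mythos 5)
Your proof is correct, but it takes a genuinely different route from the paper's. The paper argues by a true metatheoretic induction on $k$: the base case is $\textsl{\textsf{Q}}\vdash\forall z\,\neg(z<\overline{0})$, and the induction step rests on the single one-step case split $\textsl{\textsf{Q}}\vdash\forall z\,(z<\overline{k+1}\rightarrow z<\overline{k}\vee z=\overline{k})$ (citing \cite[page~73]{smith}), so the pigeonhole combinatorics is threaded through the induction and the resulting $\textsl{\textsf{Q}}$-derivations stay comparatively small. You instead iterate that same case split into the full order-adequacy lemma $\textsl{\textsf{Q}}\vdash\forall x\,\big(x<\overline{n}\leftrightarrow\bigvee_{j<n}x=\overline{j}\big)$, after which the statement for each fixed $k$ reduces, \emph{non}-inductively, to a finite propositional verification: distribute the conjunction of disjunctions over all $k^{k+1}$ assignments of the $z_i$ to numerals, note externally that every such assignment of $k+1$ variables to $k$ numerals repeats a value, and conclude $z_i=z_j$ in each case. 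Both arguments draw on the same underlying $\textsl{\textsf{Q}}$-facts; yours buys conceptual transparency (after the reduction, everything is logic with equality) at the cost of exponentially large derivations, while the paper's induction buys short proofs at the cost of an in-theory case analysis at each step. Two small remarks. First, your appeal to $\textsl{\textsf{Q}}\vdash\overline{a}\neq\overline{b}$ for distinct standard $a,b$ is superfluous: once two variables are equated to the same numeral, $z_i=z_j$ follows by symmetry and transitivity of equality alone, and numeral distinctness plays no role in the pigeonhole direction. Second, your opening announcement of ``induction on $k$'' is slightly misleading, since in your argument the only induction is the one hidden in the proof of the case-analysis lemma; the pigeonhole step itself is a direct external count. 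Your closing observation about $\textsl{\textsf{R}}$ is right and matches the paper's remark that $\textsl{\textsf{Q}}$ can be replaced by $\textsl{\textsf{R}}$ throughout --- indeed, in $\textsl{\textsf{R}}$ the case-analysis lemma is essentially an axiom schema, so your route is, if anything, even more natural there.
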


\begin{proof} This can be proved by induction (in the metalanguage) on $k$: for $k=0$ it suffices to note that
$\textsl{\textsf{Q}}\vdash\forall z\neg(z<0)$ and for the induction step it suffices to use the derivation  $\textsl{\textsf{Q}}\vdash\forall z (z<\overline{k+1}\rightarrow z<\overline{k}\vee z=\overline{k})$; cf.~\cite[page~73]{smith}.
\end{proof}

\begin{theorem}[Rosserian form of Chaitin's Theorem]\label{rosch}
For any consistent  {\sc re} extension  $T$ of   $\textsl{\textsf{Q}}$ there is a constant $c_T$ \textup{(}which is computable from $T$\textup{)} such that for any $e\geqslant c_T$ there are cofinitely many $w$'s such that $``\mathscr{K}(w)>e"$ is independent from $T$.
\end{theorem}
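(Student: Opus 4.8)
The plan is to keep the computable Chaitin constant $c_T$ supplied by Theorem~\ref{thct} and to combine two facts: that $``\mathscr{K}(w)>e"$ is unprovable in $T$ for \emph{every} $w$ once $e\geqslant c_T$ (this is exactly Theorem~\ref{thct}), and that its negation $``\mathscr{K}(w)\leqslant e"$ is unprovable in $T$ for all but finitely many $w$. Since $``\mathscr{K}(w)>e"$ is independent from $T$ precisely when neither it nor $``\mathscr{K}(w)\leqslant e"$ is a theorem of $T$, these two facts together yield the conclusion that cofinitely many $w$ work. The first fact is free, so the entire argument reduces to bounding, for each fixed $e$, the number of $w$ with $T\vdash``\mathscr{K}(w)\leqslant e"$, \emph{using only the consistency of $T$}; this is the single point at which $\Sigma_1$-soundness was invoked in Corollary~\ref{corch}, and replacing it is what makes the statement Rosserian.

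The heart of the proof is a counting argument driven by Lemma~\ref{lemphp}. Because the bound $e$ is a fixed standard numeral, $T\supseteq\textsl{\textsf{Q}}$ proves the equivalence of $``\mathscr{K}(w)\leqslant e"$ with the finite disjunction $\bigvee_{i=0}^{e}``\boldsymbol\varphi_i(0)\!\!\downarrow =w"$; hence a $T$-proof of $``\mathscr{K}(w)\leqslant e"$ furnishes, inside $T$, an index $i\leqslant e$ with $\boldsymbol\varphi_i(0)\!\!\downarrow =w$. Suppose, for contradiction, that $T\vdash``\mathscr{K}(w_j)\leqslant e"$ held for $e+2$ pairwise distinct numerals $w_0,\ldots,w_{e+1}$. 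Reasoning \emph{inside} $T$, choose for each $j$ a witnessing index $a_j\leqslant e$ with $\boldsymbol\varphi_{a_j}(0)=w_j$; since distinct numerals are provably distinct in $\textsl{\textsf{Q}}$ and each $\boldsymbol\varphi_{a}(0)$ has at most one value (provably), the indices $a_0,\ldots,a_{e+1}$ are forced to be pairwise distinct. But then $a_0,\ldots,a_{e+1}$ are $e+2$ pairwise distinct numbers each below $\overline{e+1}$, directly contradicting Lemma~\ref{lemphp} taken with $k=e+1$. Thus $T$ would prove a contradiction, so the consistency of $T$ guarantees that at most $e+1$ values of $w$ satisfy $T\vdash``\mathscr{K}(w)\leqslant e"$.

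It remains to assemble the pieces. Fix any $e\geqslant c_T$. By Theorem~\ref{thct} we have $T\nvdash``\mathscr{K}(w)>e"$ for every $w$, while the counting argument just given shows $T\nvdash``\mathscr{K}(w)\leqslant e"$ for all but at most $e+1$ many $w$. For every $w$ outside this finite exceptional set, $T$ decides neither $``\mathscr{K}(w)>e"$ nor $``\mathscr{K}(w)\leqslant e"$, i.e.\ $``\mathscr{K}(w)>e"$ is independent from $T$; since the exceptional set is finite, cofinitely many $w$ work, with the same $c_T$ computed in Theorem~\ref{thct}.

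I expect the main obstacle to be the second paragraph: the pigeonhole reasoning must be carried out \emph{formally inside} $T$ so that its conclusion is a genuine inconsistency of $T$, for this is precisely what lets simple consistency---rather than $\Sigma_1$-soundness---suffice. The two delicate ingredients are the provable reduction of the bounded statement $``\mathscr{K}(w)\leqslant e"$ to a standard-length disjunction over indices $i\leqslant e$, and the provable distinctness of the witnessing indices $a_j$ (using provable distinctness of the numerals $w_j$ together with functionality of the computation predicate); granting these, Lemma~\ref{lemphp} closes the argument.
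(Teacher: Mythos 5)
Your proposal is correct and takes essentially the same route as the paper's own proof: both fix the computable constant $c_T$ from Theorem~\ref{thct}, bound the number of $w$ with $T\vdash``\mathscr{K}(w)\leqslant e"$ by $e+1$ via the pigeonhole Lemma~\ref{lemphp} with $k=e+1$ (deriving an outright inconsistency of $T$ from $e+2$ provably distinct values forcing $e+2$ provably distinct indices below $\overline{e}+1$), and then conclude independence for all but finitely many $w$. The only differences are presentational: you pass through the provable finite-disjunction form of $``\mathscr{K}(w)\leqslant e"$ and explicitly flag the provable functionality of the computation predicate, two details the paper's proof leaves implicit.
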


\begin{proof}
By Chaitin's Theorem (\ref{chth}) there exists a constant $c_T$ (which is computable from $T$) such that for any $e\geqslant c_T$ there are  cofinitely many $w$'s such that  $``\mathscr{K}(w)>e"$ is true but unprovable in $T$. Fix an $\textsf{e}\geqslant c_T$. For no $w$ can $T\vdash``\mathscr{K}(w)>\textsf{e}"$ hold, and
  $T\vdash``\mathscr{K}(w)\leqslant \textsf{e}"$ can hold for at most $(\textsf{e}+1)$-many $w$'s:
if for some distinct $\textsf{w}_0,\textsf{w}_1,\dots,\textsf{w}_{\textsf{e}+1}$, the derivations  $T\vdash``\mathscr{K}(\textsf{w}_i)\leqslant \textsf{e}"$  hold ($i=0,1,\dots,\textsf{e}$+1) then $T\vdash\exists z_0,z_1,\dots,z_{\textsf{e}+1}
\big(\bigwedge\hspace{-2ex}\bigwedge_{i=0}^{\textsf{e}+1}
[z_i\leqslant\overline{\textsf{e}} \wedge \boldsymbol\varphi_{z_i}(0)\!\!\downarrow=\textsf{w}_i]\big)$
and so   $T\vdash\exists z_0,z_1,\dots,z_{\textsf{e}+1}
\big(\bigwedge\hspace{-2ex}\bigwedge_{0\leqslant i \leqslant \textsf{e}+1}
[z_i < \overline{\textsf{e}}+1] \wedge \bigwedge\hspace{-2ex}\bigwedge^{i\neq j}_{0 \leqslant i,j \leqslant \textsf{e}+1}[z_i\neq z_j]\big)$ which contradicts   Lemma~\ref{lemphp} (for $k=\textsf{e}+1$).
Thus, for cofinitely many   $w$'s we should have both  $T\nvdash``\mathscr{K}(w)>\textsf{e}"$ and $T\nvdash``\mathscr{K}(w)\leqslant\textsf{e}"$.
\end{proof}

\noindent 
Martin Davis \cite{davis} calls Chaitin's Theorem ``a dramatic extension of G\"odel's incompleteness theorem''. We saw that this theorem as presented in Corollary~\ref{corch} can be hardly considered   an extension of G\"odel's incompleteness theorem, as G\"odel's proof is constructive while Chaitin's is not (Theorem~\ref{unconch}). The Rosserian  form of Chaitin's Theorem as presented in Theorem~\ref{rosch} could be considered as an extension of G\"odel's and Chaitin's theorems in a sense,  even though, it  is not any more extension than Rosser's own~\cite{rosser}; let us also note that Rosser's proof is constructive (while the proof of Theorem~\ref{rosch} is not).

\section{Boolos'  Proof  for G\"odel's Incompleteness Theorem}
Jon Barwise calls it ``a very lovely proof of G\"odel's Incompleteness Theorem, probably the deepest single
result about the relationship between computers and
mathematics'', and mentions that it is ``the most straightforward proof of this result that I
have ever seen''\footnote{J. Barwise, ``Editorial Notes: This Month's Column'', {\em Notices of the American Mathematical Society}, vol~36, no.~4 (1989), page~388.}. After its first appearance  in \cite{boolos} this proof  was discussed, extended and studied in e.g.\  \cite{kiku,kita,roy,kotlar,sere,kikusa}.

\begin{notation}[Arithmetization]{\rm
For an {\sc re} theory $T$ denote the provability predicate of $T$ by $\textsl{\textsf{Pr}}_T(x)$; so $\textsl{\textsf{Con}}(T)=\neg\textsl{\textsf{Pr}}_T(\bot)$ is the consistency statement of $T$.
Suppose that the variables are $x,x',x'',x''',\cdots$ whose lengths are $1,2,3,4,\cdots$, respectively.
}\hfill \ding{71}
\end{notation}

\noindent 
  So, for any $k\in\mathbb{N}$ there are at most finitely many formulas with length $k$.


\begin{definition}[Formalizing Berry's Paradox]{\rm
For a formula $\psi(x_1,\cdots,x_m)$ with the shown (possibly empty) set of  free variables ($m\geqslant 0$)   and number $n$,  let ${\textsf{D}}(\psi,n)$ be the (G\"odel code of the)  formula $\forall x [\psi(x,\cdots,x)\leftrightarrow x=\overline{n}]$. The number $n$ is definable in $T$ by the formula $\psi$ when $\textsl{\textsf{Pr}}_T\big({\textsf{D}}(\psi,n)\big)$ holds.

Let $\textsl{\textsf{Def}}_T^{\;<z}(y)=\exists x\big[\textsl{\textsf{len}}(x)<z \wedge \textsl{\textsf{Pr}}_T\big({\textsf{D}}(x,y)\big)\big]$, where $\textsl{\textsf{len}}(x)$ denotes the length of (the formula  with G\"odel code) $x$. The  formula $\textsl{\textsf{Def}}_T^{\;<z}(y)$ states  that ``there exists  a formula $\psi(x_1,\cdots,x_m)$ whose length is smaller than the number  $z$ such that the deduction
$T\vdash\forall x[\psi(x,\cdots,x)\leftrightarrow x=\overline{y}]$ holds'', or informally ``the number $y$ is definable in $T$ by a formula with length less than $z$''.

 Let $\textsl{\textsf{Berry}}_T^{\;<v}(u)=
\neg\textsl{\textsf{Def}}_T^{\;<v}(u)
\wedge \forall y\!<u\!\;\textsl{\textsf{Def}}_T^{\;<v}(y)$, meaning that ``$u$ is the least number not definable by a formula with length less than $v$''.

 Let $\ell_T$ be the length of the formula $\textsl{\textsf{Berry}}_T^{\;<x'}(x)$ and let    $\textsl{\textsf{Boolos}}_T(x)$ be the formula $\exists x' \big[ x'= \overline{5}\cdot\overline{\ell_T} \wedge
\textsl{\textsf{Berry}}_T^{\;<x'}(x)\big]$.
Let $\textsl{\textsf{b}}_T$ be  the least number not definable by a formula with length less than $5\ell_T$.
}\hfill \ding{71}
\end{definition}

\begin{theorem}[Boolos' Theorem]\label{both}
For any consistent and {\sc re} extension $T$ of $\textsl{\textsf{Q}}$, the sentence $\textsl{\textsf{Boolos}}_T(\overline{\textsl{\textsf{b}}_T})$ is \textup{(}true but\textup{)} unprovable in $T$.
\end{theorem}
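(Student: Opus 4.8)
The plan is to run the formalized Berry paradox laid out in the definitions above, treating the truth and the unprovability of $\textsl{\textsf{Boolos}}_T(\overline{\textsl{\textsf{b}}_T})$ separately. First I would check that $\textsl{\textsf{b}}_T$ is well defined. Because the variables $x,x',x'',\dots$ have strictly increasing lengths, there are only finitely many formulas of length below the fixed bound $5\ell_T$; and since $T$ is consistent, each formula $\psi$ defines at most one number in $T$ (if $T\vdash{\textsf{D}}(\psi,n)$ and $T\vdash{\textsf{D}}(\psi,m)$ then $T\vdash\overline n=\overline m$, forcing $n=m$ as $\textsl{\textsf{Q}}\vdash\overline n\neq\overline m$ for $n\neq m$). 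Hence only finitely many numbers are definable in $T$ by a formula of length $<5\ell_T$, so the least number that is not — namely $\textsl{\textsf{b}}_T$ — exists. For truth, I would evaluate $\textsl{\textsf{Boolos}}_T(\overline{\textsl{\textsf{b}}_T})$ in $\mathbb N$ by taking the witness $x'=5\ell_T$; the sentence reduces to $\textsl{\textsf{Berry}}_T^{\;<5\ell_T}(\textsl{\textsf{b}}_T)$, whose two conjuncts are precisely the defining properties of $\textsl{\textsf{b}}_T$ (it is not definable by a formula of length $<5\ell_T$, while every smaller number is). So $\textsl{\textsf{Boolos}}_T(\overline{\textsl{\textsf{b}}_T})$ is true.

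For unprovability I would argue by contradiction, assuming $T\vdash\textsl{\textsf{Boolos}}_T(\overline{\textsl{\textsf{b}}_T})$. The central claim is that then $\textsl{\textsf{b}}_T$ is definable in $T$ by $\textsl{\textsf{Boolos}}_T$ itself, i.e.\ $T\vdash\forall x[\textsl{\textsf{Boolos}}_T(x)\leftrightarrow x=\overline{\textsl{\textsf{b}}_T}]$. The right-to-left direction is immediate from the assumption. For left-to-right (uniqueness) I work inside $T$: unpacking the assumption, and using $\textsl{\textsf{Q}}\vdash\overline 5\cdot\overline{\ell_T}=\overline{5\ell_T}$, gives $T\vdash\neg\textsl{\textsf{Def}}_T^{\;<5\ell_T}(\overline{\textsl{\textsf{b}}_T})$ together with $T\vdash\forall y<\overline{\textsl{\textsf{b}}_T}\,\textsl{\textsf{Def}}_T^{\;<5\ell_T}(y)$, while any $x$ with $\textsl{\textsf{Boolos}}_T(x)$ satisfies the analogous ``least non-definable'' conditions at $x$. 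Comparing $x$ with $\overline{\textsl{\textsf{b}}_T}$ through the numeral trichotomy $\textsl{\textsf{Q}}\vdash\forall x[x<\overline n\vee x=\overline n\vee\overline n<x]$, the case $x<\overline{\textsl{\textsf{b}}_T}$ contradicts $\neg\textsl{\textsf{Def}}_T^{\;<5\ell_T}(x)$ and the case $\overline{\textsl{\textsf{b}}_T}<x$ contradicts $\neg\textsl{\textsf{Def}}_T^{\;<5\ell_T}(\overline{\textsl{\textsf{b}}_T})$, leaving $x=\overline{\textsl{\textsf{b}}_T}$. Thus $T\vdash{\textsf{D}}(\textsl{\textsf{Boolos}}_T,\textsl{\textsf{b}}_T)$.

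The remaining step, and the real heart of the proof, is the length count $\textsl{\textsf{len}}(\textsl{\textsf{Boolos}}_T)<5\ell_T$. The subformula $\textsl{\textsf{Berry}}_T^{\;<x'}(x)$ contributes exactly $\ell_T$ symbols, and the wrapper $\exists x'[\,x'=\overline 5\cdot\overline{\ell_T}\wedge\cdots]$ contributes only the numeral $\overline 5\cdot\overline{\ell_T}$ — written as a product on purpose, so that it costs roughly $\ell_T$ rather than $5\ell_T$ symbols — plus a bounded number of logical symbols and variable occurrences, giving $\textsl{\textsf{len}}(\textsl{\textsf{Boolos}}_T)\leqslant 2\ell_T+O(1)<5\ell_T$ since $\ell_T$ is large. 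Combining this with the previous step, $\textsl{\textsf{b}}_T$ is definable in $T$ by a formula of length $<5\ell_T$, so $\textsl{\textsf{Def}}_T^{\;<5\ell_T}(\textsl{\textsf{b}}_T)$ is true — contradicting the defining property of $\textsl{\textsf{b}}_T$, namely $\neg\textsl{\textsf{Def}}_T^{\;<5\ell_T}(\textsl{\textsf{b}}_T)$. This contradiction shows $T\nvdash\textsl{\textsf{Boolos}}_T(\overline{\textsl{\textsf{b}}_T})$.

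I expect the length inequality to be the main obstacle: the whole construction is an honest diagonalization in which the ``short description'' $\textsl{\textsf{Boolos}}_T$ must turn out shorter than the very threshold $5\ell_T$ it talks about, and the constant $5$ together with the device of writing $\overline 5\cdot\overline{\ell_T}$ in place of $\overline{5\ell_T}$ is exactly what buys the needed slack. One must also check that there is no circularity — $\ell_T$ is the length of $\textsl{\textsf{Berry}}_T^{\;<x'}(x)$ with $x'$ \emph{free}, hence fixed before the threshold $5\ell_T$ is ever formed — and note that the consistency of $T$ is used essentially, both to make $\textsl{\textsf{b}}_T$ exist (finiteness of the definable numbers) and to keep each formula's definition single-valued.
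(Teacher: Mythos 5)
Your proposal is correct and follows essentially the same route as the paper's proof: first the $\textsl{\textsf{Q}}$-provable uniqueness of the Berry number via numeral trichotomy, so that $T\vdash\textsl{\textsf{Boolos}}_T(\overline{\textsl{\textsf{b}}_T})$ would turn $\textsl{\textsf{Boolos}}_T(x)$ into a definition of $\textsl{\textsf{b}}_T$ in $T$; then the length count exploiting the product numeral $\overline{5}\cdot\overline{\ell_T}$; then the contradiction with the defining property of $\textsl{\textsf{b}}_T$ (the paper phrases this last step as $T$ proving both $\textsl{\textsf{Def}}_T^{\;<\overline{5}\cdot\overline{\ell_T}}(\overline{\textsl{\textsf{b}}_T})$, via $\Sigma_1$-completeness, and its negation, contradicting consistency --- an immaterial variant of your direct truth-level contradiction). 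One quantitative slip worth fixing: under the paper's convention $\textsl{\textsf{len}}(\overline{m})=3m+1$, the numeral $\overline{\ell_T}$ alone costs about $3\ell_T$ symbols, so the correct estimate is $\textsl{\textsf{len}}\big(\textsl{\textsf{Boolos}}_T(x)\big)\approx 4\ell_T+O(1)$ (the paper computes $4\ell_T+26$), not $2\ell_T+O(1)$ as you wrote --- still safely below $5\ell_T$, so your conclusion is unaffected.
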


\begin{proof}
First we show that $\textsl{\textsf{Q}}\vdash\forall u,v [\textsl{\textsf{Berry}}_T^{\;<v}(\overline{n})
\wedge\textsl{\textsf{Berry}}_T^{\;<v}(u)
\rightarrow \overline{n}=u]$ holds for any $n\in\mathbb{N}$. Reason inside $\textsl{\textsf{Q}}$:  if for some $u,v$ we have (a)~$\textsl{\textsf{Berry}}_T^{\;<v}(\overline{n})$ and (b)~$\textsl{\textsf{Berry}}_T^{\;<v}(u)$ then (a')~$\neg\textsl{\textsf{Def}}_T^{\;<v}(\overline{n})$, (a'')~$\forall y<\overline{n}\textsl{\textsf{Def}}_T^{\;<v}(y)$, (b')~$\neg\textsl{\textsf{Def}}_T^{\;<v}(u)$ and (b'')~$\forall y<u\textsl{\textsf{Def}}_T^{\;<v}(y)$ hold. Now, by $u\leqslant\overline{n}\vee\overline{n}\leqslant u$,  if $u\neq\overline{n}$ then either $u<\overline{n}$ or $\overline{n}<u$ holds. In the former case we have a contradiction between (a'') and (b'), and in the latter case we have a contradiction between (a') and (b''). Therefore, $\overline{n}=u$.
Now, assume (for the sake of contradiction) that $T\vdash
\textsl{\textsf{Boolos}}_T(\overline{\textsl{\textsf{b}}_T})$. Then   $T\vdash\forall u,v [\textsl{\textsf{Berry}}_T^{\;<v}(\overline{n})
\wedge\textsl{\textsf{Berry}}_T^{\;<v}(u)
\rightarrow \overline{n}=u]$, shown above,  implies the deduction    $T\vdash\forall x[\textsl{\textsf{Boolos}}_T(x)\leftrightarrow x=\overline{\textsl{\textsf{b}}_T}]$. Thus, $\textsl{\textsf{b}}_T$ is definable in $T$
by the formula $\textsl{\textsf{Boolos}}_T(x)$
whose length is less than
$\ell_T+\textsl{\textsf{len}}(\overline{5}\cdot\overline{\ell_T})
+9=4\ell_T+26<5\ell_T$ (since, for any $m$,
the term   $\overline{m}=\textsl{\textsf{s}}
(\cdots(\textsl{\textsf{s}}(0))\ldots)$
[$m$-times $\textsl{\textsf{s}}$] has length $3m+1$).
So, the $\Sigma_1$-sentence
$\textsl{\textsf{Def}}_T^{\;<\overline{5}
\cdot\overline{\ell_T}}(\overline{\textsl{\textsf{b}}_T})$ is true,
thus provable in $\textsl{\textsf{Q}}$;
whence $T\vdash
\textsl{\textsf{Def}}_T^{\;<\overline{5}
\cdot\overline{\ell_T}}(\overline{\textsl{\textsf{b}}_T})$.
On the other hand $T\vdash\textsl{\textsf{Boolos}}_T
(\overline{\textsl{\textsf{b}}_T})$
implies that $T\vdash\neg
\textsl{\textsf{Def}}_T^{\;<\overline{5}\cdot
\overline{\ell_T}}(\overline{\textsl{\textsf{b}}_T})$,
and this contradicts the consistency of $T$. 
\end{proof}

\noindent 
The formula $\textsl{\textsf{Boolos}}_T(\overline{\textsl{\textsf{b}}_T})$
is not $\Pi_1$; however, the following modification from \cite{kiku} proves a $\Pi_1$-incompleteness.

\begin{theorem}[Boolos' Theorem, modified]\label{boki}
For any consistent and {\sc re} extension $T$ of $\textsl{\textsf{Q}}$, the true $\Pi_1$-sentence $\neg
\textsl{\textsf{Def}}_T^{\;<\overline{5}
\cdot
\overline{\ell_T}}(\overline{\textsl{\textsf{b}}_T})$ is  unprovable in $T$.
\end{theorem}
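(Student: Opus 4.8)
The plan is to reduce the claim to the already-established Boolos' Theorem~(\ref{both}). Before addressing unprovability, let me record why the sentence deserves its billing: it is $\Pi_1$ because $\textsl{\textsf{Def}}_T^{\;<z}(y)$ is $\Sigma_1$ (an existential quantifier prefixed to the computable matrix $\textsl{\textsf{len}}(x)<z\wedge\textsl{\textsf{Pr}}_T(\textsf{D}(x,y))$, recalling that $\textsl{\textsf{Pr}}_T$ is $\Sigma_1$), and it is true because $\textsl{\textsf{b}}_T$ was defined to be the \emph{least} number not definable by a formula of length less than $5\ell_T$, so in particular $\textsl{\textsf{b}}_T$ itself is not so definable, i.e.\ $\neg\textsl{\textsf{Def}}_T^{\;<\overline{5}\cdot\overline{\ell_T}}(\overline{\textsl{\textsf{b}}_T})$ holds in $\mathbb{N}$.

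For the unprovability I would argue by contradiction: assume $T\vdash\neg\textsl{\textsf{Def}}_T^{\;<\overline{5}\cdot\overline{\ell_T}}(\overline{\textsl{\textsf{b}}_T})$ and derive $T\vdash\textsl{\textsf{Boolos}}_T(\overline{\textsl{\textsf{b}}_T})$, which is ruled out by Theorem~\ref{both}. Since the existential quantifier in $\textsl{\textsf{Boolos}}_T(x)$ has the single witness $x'=\overline{5}\cdot\overline{\ell_T}$, the sentence $\textsl{\textsf{Boolos}}_T(\overline{\textsl{\textsf{b}}_T})$ is provably equivalent over $\textsl{\textsf{Q}}$ to $\textsl{\textsf{Berry}}_T^{\;<\overline{5}\cdot\overline{\ell_T}}(\overline{\textsl{\textsf{b}}_T})$, i.e.\ to the conjunction $\neg\textsl{\textsf{Def}}_T^{\;<\overline{5}\cdot\overline{\ell_T}}(\overline{\textsl{\textsf{b}}_T})\wedge\forall y<\overline{\textsl{\textsf{b}}_T}\,\textsl{\textsf{Def}}_T^{\;<\overline{5}\cdot\overline{\ell_T}}(y)$. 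The left conjunct is exactly the sentence we have assumed provable, so it only remains to produce the right conjunct.

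The crux is therefore to show $T\vdash\forall y<\overline{\textsl{\textsf{b}}_T}\,\textsl{\textsf{Def}}_T^{\;<\overline{5}\cdot\overline{\ell_T}}(y)$. I would do this one numeral at a time: for each $m<\textsl{\textsf{b}}_T$, minimality of $\textsl{\textsf{b}}_T$ guarantees that $m$ \emph{is} definable by some formula of length less than $5\ell_T$, so the $\Sigma_1$-sentence $\textsl{\textsf{Def}}_T^{\;<\overline{5}\cdot\overline{\ell_T}}(\overline{m})$ is true, hence provable in $\textsl{\textsf{Q}}\subseteq T$ by $\Sigma_1$-completeness. These finitely many instances are then glued into the single bounded-universal sentence by the standard numeral-enumeration fact $\textsl{\textsf{Q}}\vdash\forall y\,(y<\overline{k}\rightarrow y=\overline{0}\vee\cdots\vee y=\overline{k-1})$ (the same flavour of reasoning on numerals as in Lemma~\ref{lemphp}; cf.~\cite[page~73]{smith}), applied with $k=\textsl{\textsf{b}}_T$. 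Conjoining the two conjuncts gives $T\vdash\textsl{\textsf{Boolos}}_T(\overline{\textsl{\textsf{b}}_T})$, contradicting Theorem~\ref{both} and completing the argument.

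The step I expect to require the most care is this last assembly: passing from the separately provable closed instances $\textsl{\textsf{Def}}_T^{\;<\overline{5}\cdot\overline{\ell_T}}(\overline{m})$ to one bounded-quantifier formula is legitimate only because $\textsl{\textsf{Q}}$ can prove that the elements below a numeral are exactly the smaller numerals. The rest — the equivalence of $\textsl{\textsf{Boolos}}_T(\overline{\textsl{\textsf{b}}_T})$ with the Berry conjunction, and the length bookkeeping showing $\textsl{\textsf{Boolos}}_T(x)$ is short — is already discharged inside the proof of Theorem~\ref{both}, so no new combinatorics beyond Lemma~\ref{lemphp}-style facts should be needed.
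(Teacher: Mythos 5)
Your proposal is correct and follows essentially the same route as the paper: assume $T\vdash\neg\textsl{\textsf{Def}}_T^{\;<\overline{5}\cdot\overline{\ell_T}}(\overline{\textsl{\textsf{b}}_T})$, prove the conjunct $\forall y<\overline{\textsl{\textsf{b}}_T}\,\textsl{\textsf{Def}}_T^{\;<\overline{5}\cdot\overline{\ell_T}}(y)$ in $T$, and conclude $T\vdash\textsl{\textsf{Boolos}}_T(\overline{\textsl{\textsf{b}}_T})$, contradicting Theorem~\ref{both}. The only difference is that where the paper simply asserts the bounded-universal sentence is ``true $\Sigma_1$, thus provable,'' you unpack this into instance-by-instance $\Sigma_1$-completeness plus the numeral-enumeration fact $\textsl{\textsf{Q}}\vdash\forall y\,(y<\overline{k}\rightarrow y=\overline{0}\vee\cdots\vee y=\overline{k-1})$ --- a slightly more careful rendering of the same step, not a different argument.
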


\begin{proof}
Assume, to the contrary,  that $T\vdash\neg
\textsl{\textsf{Def}}_T^{\;<\overline{5}
\cdot
\overline{\ell_T}}(\overline{\textsl{\textsf{b}}_T})$. Since any $y<\textsl{\textsf{b}}_T$ is definable by a formula with length less than $5\ell_T$ then the $\Sigma_1$-sentence $\forall y<\textsl{\textsf{b}}_T\;
\textsl{\textsf{Def}}_T^{\;<\overline{5}
\cdot
\overline{\ell_T}}(y)$ is true and thus provable in $T$. Therefore, $\neg
\textsl{\textsf{Def}}_T^{\;<\overline{5}
\cdot
\overline{\ell_T}}(\overline{\textsl{\textsf{b}}_T})
\wedge \forall y<\textsl{\textsf{b}}_T\;
\textsl{\textsf{Def}}_T^{\;<\overline{5}
\cdot
\overline{\ell_T}}(y)$ is provable in $T$  and so $T\vdash\textsl{\textsf{Boolos}}_T
(\overline{\textsl{\textsf{b}}_T})$,  contradicting Theorem~\ref{both}.
\end{proof}

\noindent 
Even though $\ell_T$ is computable from $T$, below we show that one cannot calculate $\textsl{\textsf{b}}_T$.

\begin{theorem}[Non-Constructivity of Boolos'  Proof]\label{uncombo}
 There is no algorithm such
that for a given consistent and {\sc re} extension $T$ of $\textsl{\textsf{Q}}$ can compute $\textsl{\textsf{b}}_T$.
\end{theorem}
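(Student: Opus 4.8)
The plan is to show that a single algorithm computing $\textsl{\textsf{b}}_T$ would supply, \emph{uniformly and hence with only bounded overhead}, a definition of $\textsl{\textsf{b}}_T$ inside $T$ that is far too short, contradicting the very choice of $\textsl{\textsf{b}}_T$ as the least number \emph{not} definable in $T$ by a formula of length below $5\ell_T$. So suppose, for contradiction, that there is an algorithm $A$ which, given a description (index) $e$ of the \textsc{re} axiom set of a consistent $T\supseteq\textsl{\textsf{Q}}$, outputs $\textsl{\textsf{b}}_T$. Note that both $\ell_T$ and $\textsl{\textsf{b}}_T$ depend on $e$ only through the numeral $\overline{e}$ occurring inside $\textsl{\textsf{Pr}}_T$, so I am free to replace $e$ by any padded index of the \emph{same} theory.

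The witnessing formula I would write down is $\delta(x)$, defined as
\[
\exists p\big[\mathrm{Comp}_A(\overline{e},p,x)\wedge\forall q<p\,\neg\exists x'\,\mathrm{Comp}_A(\overline{e},q,x')\big],
\]
where $\mathrm{Comp}_A(\overline{e},p,x)$ is the bounded predicate ``$p$ codes a complete halting computation of $A$ on input $\overline{e}$ with output $x$''; thus $\delta(x)$ says ``$x$ is the output of the \emph{least} halting computation of $A$ on $\overline{e}$'', and by hypothesis this output is $\textsl{\textsf{b}}_T$. I would then verify that $\textsl{\textsf{b}}_T$ is \emph{provably and uniquely} defined in $T$ by $\delta$, i.e.\ $T\vdash\forall x[\delta(x)\leftrightarrow x=\overline{\textsl{\textsf{b}}_T}]$. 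The direction $T\vdash\delta(\overline{\textsl{\textsf{b}}_T})$ is immediate from the $\Sigma_1$-completeness of $\textsl{\textsf{Q}}$, since $\delta(\overline{\textsl{\textsf{b}}_T})$ is a true $\Sigma_1$-sentence. For uniqueness, letting $\overline{p_0}$ name the least halting computation, $\textsl{\textsf{Q}}$ proves the true $\Delta_0$-facts $\mathrm{Comp}_A(\overline{e},\overline{p_0},\overline{\textsl{\textsf{b}}_T})$ and $\forall q<\overline{p_0}\,\neg\exists x'\,\mathrm{Comp}_A(\overline{e},q,x')$, as well as the relevant trichotomy instance $\forall p\,(p<\overline{p_0}\vee p=\overline{p_0}\vee \overline{p_0}<p)$, exactly as in Lemma~\ref{lemphp}; reasoning inside $\textsl{\textsf{Q}}$ as in the uniqueness step of Theorem~\ref{both}, any $p$ satisfying the matrix of $\delta$ must equal $\overline{p_0}$, forcing its output to be $\overline{\textsl{\textsf{b}}_T}$. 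The point to stress is that the (possibly huge) numerals $\overline{p_0}$ and $\overline{\textsl{\textsf{b}}_T}$ appear only in this \emph{metatheoretic} derivation, never in $\delta$.

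The decisive step is the length count, where the uniformity of $A$ is cashed in. Both $\delta$ and the Berry formula $\textsl{\textsf{Berry}}_T^{\;<x'}(x)$ depend on $e$ \emph{only} through occurrences of the numeral $\overline{e}$; all remaining structure has size bounded by a constant depending on $A$ alone (for $\delta$) or on nothing (for $\textsl{\textsf{Berry}}$). Since $\overline{e}$ occurs at least twice in $\textsl{\textsf{Berry}}_T^{\;<x'}(x)$ (through the two copies of $\textsl{\textsf{Def}}_T$) while it can be made to occur only once in $\delta$, one obtains $\textsl{\textsf{len}}(\delta)\leqslant \ell_T+C$ for a constant $C=C(A)$. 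By the padding lemma I may now pick an index $e$ of $\textsl{\textsf{Q}}$ itself with $\ell_T$ so large that $\ell_T+C<5\ell_T$. For this $e$ the number $\textsl{\textsf{b}}_T$ is definable in $T$ by the formula $\delta$ of length less than $5\ell_T$, so $\textsl{\textsf{Def}}_T^{\;<\overline{5}\cdot\overline{\ell_T}}(\overline{\textsl{\textsf{b}}_T})$ holds; this contradicts the definition of $\textsl{\textsf{b}}_T$ as the least number \emph{not} so definable (equivalently, it contradicts Theorem~\ref{boki}). Hence no such $A$ can exist.

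I expect the main obstacle to be reconciling the two competing demands on $\delta$: it must \emph{provably} define $\textsl{\textsf{b}}_T$ in the very weak theory $\textsl{\textsf{Q}}$, yet stay short, of length $\ell_T+O(1)$. A naive functional definition (for instance, bounding the computation by its actual length) would inject a large numeral into $\delta$ and wreck the length estimate, whereas simply dropping the bound endangers provable uniqueness; the least-halting-computation formulation combined with the $\Delta_0$-completeness of $\textsl{\textsf{Q}}$ is exactly what squares the two. I would also remark that the tower construction of Theorem~\ref{unconch} is less convenient here, since the natural analogue of the Chaitin constant is the length $\ell_T$, which is not controlled along a chain $T_0\subseteq T_1\subseteq\cdots$ the way the Chaitin constants were.
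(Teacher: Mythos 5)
Your proof is correct in substance but takes a genuinely different route from the paper. The paper's proof is an infinite-tower diagonalization: it builds $T_0=\textsl{\textsf{Q}}\subseteq T_1\subseteq T_2\subseteq\cdots$ by iterating the assumed algorithm, notes $\lim_j\ell_{T_j}=\infty$, proves the general fact that $\neg\textsl{\textsf{Def}}_T^{\;<v}(u)$ forces $\mathscr{K}(u)\geqslant\hbar(v)$ for a computable $\hbar$ with $\lim_n\hbar(n)=\infty$, and thereby manufactures a computable function $x\mapsto\textsl{\textsf{b}}_{T_{\iota(x)}}$ with $\mathscr{K}\big(\textsl{\textsf{b}}_{T_{\iota(x)}}\big)>x$, contradicting Lemma~\ref{mainlemma} (whose own proof rests on Kleene's recursion theorem). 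You instead inline the same Berry-style ``short description'' idea and finish in one shot: the assumed algorithm yields a formula $\delta$ of length $\textsl{\textsf{len}}(\overline{e})+O(1)$ that $\textsl{\textsf{Q}}$-provably and uniquely defines $\textsl{\textsf{b}}_T$ (your least-halting-computation trick is exactly the textbook strong-representability construction, and your single-occurrence device $\exists z[z=\overline{e}\wedge\cdots]$ mirrors the paper's own $\exists x'[x'=\overline{5}\cdot\overline{\ell_T}\wedge\textsl{\textsf{Berry}}_T^{\;<x'}(x)]$), and padding replaces the recursion theorem as the self-referential engine, making $5\ell_T$ outrun $\textsl{\textsf{len}}(\delta)$ and contradicting the minimality of $\textsl{\textsf{b}}_T$ directly. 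What each approach buys: yours avoids the tower and Lemma~\ref{mainlemma} entirely and proves something sharper --- the algorithm fails already on padded presentations of the single theory $\textsl{\textsf{Q}}$, exploiting the presentation-dependence of $\ell_T$ and $\textsl{\textsf{b}}_T$; the paper's proof, by contrast, is insensitive to how $\textsl{\textsf{Pr}}_T$ encodes the index, since $\lim_j\ell_{T_j}=\infty$ follows merely from the fact that there are finitely many formulas of each length and the $T_j$ are pairwise distinct theories. That is the one caveat in your argument: your length count tacitly assumes the standard uniform arithmetization in which (a closed term for) $e$ occurs inside $\textsl{\textsf{Pr}}_T$, so that $\ell_T\gtrsim\textsl{\textsf{len}}(\overline{e})$ and padding drives $\ell_T\to\infty$; this is the natural reading and the paper makes comparable tacit assumptions, but to be fully robust you should let $\delta$ reuse whatever term for $e$ the formula $\textsl{\textsf{Pr}}_T$ actually contains, rather than insisting on the numeral $\overline{e}$. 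Your closing observation is also apt: the Chaitin-style tower of Theorem~\ref{unconch} is indeed what the paper uses here, via $\mathscr{K}$, and your remark explains precisely what extra work ($\hbar$ and the definability-to-complexity lemma) that detour requires.
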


\begin{proof}
Assume that $\textsl{\textsf{b}}_T$ is computable from $T$, and let $T_0=\textsl{\textsf{Q}}$ and inductively $T_{j+1}=T_j+\neg
\textsl{\textsf{Def}}_{T_j}^{\;<\overline{5}\cdot
\overline{\ell_{T_j}}}
(\overline{\textsl{\textsf{b}}_{T_j}})$.
Define the function $\hbar(n)$, for any $n\in\mathbb{N}$, to be the maximum of $m$'s such that $\forall j<m: \textsl{\textsf{len}}(``\boldsymbol\varphi_j(0)\!\!\downarrow=x")<n$. This is a computable and non-decreasing function; also $\lim_n\hbar(n)=\infty$. So, from $\lim_j\ell_{T_j}=\infty$ we have $\lim_j\hbar({5}{\ell_{T_j}})=\infty$. Therefore, for any (given) $x$ one can compute some $\iota(x)$ such that $\hbar(5\ell_{T_{\iota(x)}})>x$. The proof will be complete when show that $\mathscr{K}(\textsl{\textsf{b}}_{T_{j}})
\geqslant\hbar(5\ell_{T_{j}})$ holds for any $j$: Because, by the computability of $\textsl{\textsf{b}}_{T_j}$ from $j$, we will have a computable function $x\mapsto\textsl{\textsf{b}}_{T_{\iota(x)}}$ which satisfies $\forall x: \mathscr{K}\big(\textsl{\textsf{b}}_{T_{\iota(x)}}\big)
\geqslant\hbar(5\ell_{T_{\iota(x)}})>x$ contradicting Lemma~\ref{mainlemma}.
For showing that $\mathscr{K}(\textsl{\textsf{b}}_{T_{j}})\geqslant\hbar(5\ell_{T_{j}})$ holds for any $j$, we show more generally that for any $u,v$ if $\neg
\textsl{\textsf{Def}}_T^{\;<v}(u)$ holds, for some consistent $T\supseteq\textsl{\textsf{Q}}$, then $\mathscr{K}(u)\geqslant\hbar(v)$: If, to the contrary, we have  $\mathscr{K}(u)<\hbar(v)$ then there exists some $j$ such that  (1)~$j<\hbar(v)$ and (2)~$\boldsymbol\varphi_j(0)\!\!\downarrow=u$. By (2) the number $u$ is definable by the formula $``\boldsymbol\varphi_j(0)\!\!\downarrow=x"$ in $\textsl{\textsf{Q}}$ (and so in $T$), and by (1) the length of the formula $``\boldsymbol\varphi_j(0)\!\!\downarrow=x"$ is less than $v$; so  $\textsl{\textsf{Def}}_T^{\;<v}(u)$ should hold, a contradiction.
\end{proof}

\noindent 
Of course, when $T$ is  $\Sigma_1$-sound then
$\neg
\textsl{\textsf{Def}}_T^{\;<\overline{5}
\cdot\overline{\ell_T}}(\overline{\textsl{\textsf{b}}_T})$ is   independent from $T$. Also  $\textsl{\textsf{Boolos}}_T(\overline{\textsl{\textsf{b}}_T})$ is independent from $T$: Because if $T\vdash\neg\textsl{\textsf{Boolos}}_T
(\overline{\textsl{\textsf{b}}_T})$ then $T\vdash\neg\textsl{\textsf{Berry}}_T^{\;<\overline{5}
\cdot
\overline{\ell_T}}
(\overline{\textsl{\textsf{b}}_T})$ and so  $T\vdash \forall y\!<\overline{\textsl{\textsf{b}}_T}\!
\;\textsl{\textsf{Def}}_T^{\;<\overline{5}\cdot
\overline{\ell_T}}(y)
\rightarrow
\textsl{\textsf{Def}}_T^{\;<\overline{5}\cdot
\overline{\ell_T}}(\overline{\textsl{\textsf{b}}_T})$. But $\forall y\!<\overline{\textsl{\textsf{b}}_T}\!\;
\textsl{\textsf{Def}}_T^{\;<\overline{5}\cdot
\overline{\ell_T}}(y)$, being a true $\Sigma_1$-sentence, is provable in $T$. Whence, we have  $T\vdash
\textsl{\textsf{Def}}_T^{\;<\overline{5}\cdot
\overline{\ell_T}}(\overline{\textsl{\textsf{b}}_T})$, a contradiction.
 However, we show in the following theorem that if $T$ is not $\Sigma_1$-sound then
$\textsl{\textsf{Def}}_T^{\;<\overline{5}\cdot
\overline{\ell_T}}(\overline{\textsl{\textsf{b}}_T})$, and so  $\neg\textsl{\textsf{Boolos}}_T(\overline{\textsl{\textsf{b}}_T})$, could be provable in $T$. For the following theorem to make sense we note that for any   theory $U$  satisfying the following conditions

\begin{itemize}
\item[(i)]  $U\nvdash\textsl{\textsf{Con}}(U)$,  i.e., G\"odel's Second Incompleteness Theorem holds for $U$;
\item[(ii)]  $U\vdash \textsl{\textsf{Con}}(U+\psi) \rightarrow \textsl{\textsf{Con}}(U)$, for any $\psi$;
\end{itemize}

\noindent
there exists a consistent theory $S\supseteq U$ such that $S+\textsl{\textsf{Con}}(S)$ is not consistent:
The theory $S=U+
\neg\textsl{\textsf{Con}}(U)$ is consistent
by  (i), and
$S\vdash\neg\textsl{\textsf{Con}}(S)$ because $S\vdash\neg\textsl{\textsf{Con}}(U)$ by the definition of $S$  and
$S\vdash \neg\textsl{\textsf{Con}}(U) \rightarrow \neg\textsl{\textsf{Con}}(S)$ by (ii).

\noindent 
One example for   a theory that satisfies the conditions (i) and (ii) above,  and also (iii) in Theorem~\ref{nrobo} and (iv)   in Theorem~\ref{boopt} below,    is Peano's Arithmetic. This arithmetic is indeed too strong and the finitely axiomatizable theory I$\Sigma_1$ (see~\cite{hp}) satisfies the conditions (i), (ii), (iii) and (iv). Even the weaker theories  I$\Delta_0+\Omega_1$ (see~\cite{wp}) and $\textsl{\textsf{S}}_2^1$ (see~\cite{buss}) are strong enough to satisfy them.

\begin{theorem}[Boolos' Proof is not Rosserian]\label{nrobo}
Suppose that a consistent and {\sc re} extension $T$ of $\textsl{\textsf{Q}}$   satisfies the following condition for any formula $\psi\!:$

\begin{itemize}
\item[\textup{(iii)}] $T\vdash \textsl{\textsf{Pr}}_T(\bot)\rightarrow \textsl{\textsf{Pr}}_T(\psi)$.
\end{itemize}

\noindent
If $T+\textsl{\textsf{Con}}(T)$ is \underline{in}consistent  then 
 for any $b\in\mathbb{N}$ we have
 $T\vdash\textsl{\textsf{Def}}_T^{\;<\overline{5}\cdot
\overline{\ell_T}}(\overline{b})$, and so  $T\vdash\neg\textsl{\textsf{Boolos}}_T(\overline{{b}})$.
\end{theorem}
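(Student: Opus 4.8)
The plan is to unwind the hypothesis that $T+\textsl{\textsf{Con}}(T)$ is inconsistent. That hypothesis says exactly $T\vdash\neg\textsl{\textsf{Con}}(T)$, i.e.\ $T\vdash\neg\neg\textsl{\textsf{Pr}}_T(\bot)$, and hence (classically) $T\vdash\textsl{\textsf{Pr}}_T(\bot)$. Plugging this into condition (iii), instantiated at an arbitrary sentence $\sigma$, and applying modus ponens yields $T\vdash\textsl{\textsf{Pr}}_T(\sigma)$ for \emph{every} sentence $\sigma$. In words: although $T$ is genuinely consistent, it falsely proves its own inconsistency, and therefore internally certifies every sentence as provable. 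This one observation drives the entire argument; the remaining work is only to cast it into the syntactic shape of $\textsl{\textsf{Def}}_T^{\;<\overline{5}\cdot\overline{\ell_T}}(\overline{b})$.

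Next I would fix, once and for all and independently of $b$, a short formula $\psi_0$ whose length is smaller than $5\ell_T$; since $\ell_T$ is the length of the substantial formula $\textsl{\textsf{Berry}}_T^{\;<x'}(x)$, the bound $5\ell_T$ comfortably exceeds the length of any fixed atomic formula such as $x=x$. The key — and perhaps counter-intuitive — point is that $\psi_0$ need \emph{not} genuinely define $b$ in $T$: the predicate $\textsl{\textsf{Def}}_T^{\;<z}(y)$ demands only $\textsl{\textsf{Pr}}_T\big({\textsf{D}}(x,y)\big)$, and by the previous paragraph $T$ proves $\textsl{\textsf{Pr}}_T$ of the sentence ${\textsf{D}}(\psi_0,b)=\forall x[\psi_0(x,\ldots,x)\leftrightarrow x=\overline{b}]$ whether or not that sentence is true (for $\psi_0$ being $x=x$ it is in fact false).

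Then I would assemble the existential witness. Let $c_0$ be the standard G\"odel code of $\psi_0$. The term ${\textsf{D}}(\overline{c_0},\overline{b})$ is provably in $\textsl{\textsf{Q}}$ equal to the numeral of the actual code of ${\textsf{D}}(\psi_0,b)$, so from $T\vdash\textsl{\textsf{Pr}}_T\big({\textsf{D}}(\psi_0,b)\big)$ we obtain $T\vdash\textsl{\textsf{Pr}}_T\big({\textsf{D}}(\overline{c_0},\overline{b})\big)$. Moreover $\textsl{\textsf{len}}(\overline{c_0})<\overline{5}\cdot\overline{\ell_T}$ is a true $\Delta_0$ (hence $\Sigma_1$) fact about numerals, so it is provable in $\textsl{\textsf{Q}}\subseteq T$. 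Instantiating the leading existential quantifier of $\textsl{\textsf{Def}}_T^{\;<\overline{5}\cdot\overline{\ell_T}}(\overline{b})$ with $x\mapsto\overline{c_0}$ then gives $T\vdash\textsl{\textsf{Def}}_T^{\;<\overline{5}\cdot\overline{\ell_T}}(\overline{b})$ for every $b\in\mathbb{N}$.

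Finally, since $\textsl{\textsf{Berry}}_T^{\;<v}(u)$ carries $\neg\textsl{\textsf{Def}}_T^{\;<v}(u)$ as a conjunct, the line just obtained forces $T\vdash\neg\textsl{\textsf{Berry}}_T^{\;<\overline{5}\cdot\overline{\ell_T}}(\overline{b})$; and because the only admissible witness $x'$ in $\textsl{\textsf{Boolos}}_T(\overline{b})=\exists x'[x'=\overline{5}\cdot\overline{\ell_T}\wedge\textsl{\textsf{Berry}}_T^{\;<x'}(\overline{b})]$ is $\overline{5}\cdot\overline{\ell_T}$ itself, this is exactly $T\vdash\neg\textsl{\textsf{Boolos}}_T(\overline{b})$. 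I expect no serious obstacle: the conceptual heart is the vacuous-definability trick of the first two paragraphs, and the only thing to verify carefully is the routine bookkeeping — the existence of a fixed $\psi_0$ of length $<5\ell_T$ and the $\textsl{\textsf{Q}}$-provable coding identity for ${\textsf{D}}(\overline{c_0},\overline{b})$ — none of which poses real difficulty.
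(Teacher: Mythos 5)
Your proposal is correct and takes essentially the same route as the paper: both arguments extract $T\vdash\textsl{\textsf{Pr}}_T(\bot)$ from the inconsistency of $T+\textsl{\textsf{Con}}(T)$, apply (iii) to get $T\vdash\textsl{\textsf{Pr}}_T\big({\textsf{D}}(\psi,b)\big)$ for a fixed formula $\psi$ of length below $5\ell_T$ that need not actually define $b$ (the paper suggests $\textsl{\textsf{Berry}}_T^{\;<x'}(x)$ where you use $x=x$), and then witness the existential quantifier of $\textsl{\textsf{Def}}_T^{\;<\overline{5}\cdot\overline{\ell_T}}(\overline{b})$ to conclude, whence $T\vdash\neg\textsl{\textsf{Berry}}_T^{\;<\overline{5}\cdot\overline{\ell_T}}(\overline{b})$ and $T\vdash\neg\textsl{\textsf{Boolos}}_T(\overline{b})$. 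Your extra bookkeeping (the numeral $\overline{c_0}$, the $\textsl{\textsf{Q}}$-provable length bound as a true $\Sigma_1$ fact, and the uniqueness of the witness $\overline{5}\cdot\overline{\ell_T}$ in $\textsl{\textsf{Boolos}}_T$) is accurate and merely spells out what the paper compresses into its one-line existential instantiation.
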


\begin{proof}
If $T\vdash\neg\textsl{\textsf{Con}}(T)$ then $T\vdash\textsl{\textsf{Pr}}_T(\bot)$ and so $T\vdash\textsl{\textsf{Pr}}_T(\psi)$, for any  $\psi$, by the condition (iii). In particular, if $\psi$ is a formula with length less than $5\ell_T$ (for example $\textsl{\textsf{Berry}}^{\;<x'}(x)$) then $T\vdash\textsl{\textsf{Pr}}_T\big({\textsf{D}}
(\psi,b)\big)$ and so for any arbitrary number $b$ we have the following  deduction: $T\vdash\exists x
\big[\textsl{\textsf{len}}(x)<\overline{5}\cdot\overline{\ell_T}
\wedge \textsl{\textsf{Pr}}_T
\big({\textsf{D}}(x,b)\big)
\big]$. Thus $T\vdash
\textsl{\textsf{Def}}_T^{\;<\overline{5}
\cdot\overline{\ell_T}}(\overline{{b}})$, whence  $T\vdash\neg\textsl{\textsf{Berry}}_T^{\;<\overline{5}
\cdot\overline{\ell_T}}(\overline{{b}})$
and
$T\vdash\neg\textsl{\textsf{Boolos}}_T(\overline{{b}})$.
\end{proof}

\noindent 
So, if $T+\textsl{\textsf{Con}}(T)$ is not consistent, then $\neg
\textsl{\textsf{Def}}_T^{\;<\overline{5}
\cdot\overline{\ell_T}}(\overline{{b}})$  is not independent from the theory $T$ (neither is $\textsl{\textsf{Boolos}}_T(\overline{{b}})$) for any $b$. However, if $T+\textsl{\textsf{Con}}(T)$ is consistent, then a variant of Boolos' proof can go through (cf.~\cite[Theorem~7.2]{kita}) as is shown in the following  theorem.
\begin{theorem}[Boolos' Theorem, restated]\label{boopt}
If   an {\sc re} extension $T$ of $\textsl{\textsf{Q}}$ satisfies the following condition  for any $m,n,k\in\mathbb{N}$,

\begin{itemize}
\item[\textup{(iv)}]
$T\vdash \textsl{\textsf{Pr}}_T\big({\textsf{D}}
(k,m)\big) \wedge \textsl{\textsf{Pr}}_T\big({\textsf{D}}
(k,n)\big) \wedge \overline{m}\neq\overline{n} \rightarrow \neg\textsl{\textsf{Con}}(T)$,
\end{itemize} 

\noindent and the theory $T+\textsl{\textsf{Con}}(T)$ is consistent, then there exists some $b\in\mathbb{N}$ such that  $\neg\textsl{\textsf{Def}}_T^{\;<\overline{5}\cdot\overline{\ell_T}}
(\overline{{b}})$, and also $\textsl{\textsf{Boolos}}_T(\overline{{b}})$, is independent from $T$.
\end{theorem}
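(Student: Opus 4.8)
The plan is to pin down a suitable $b$ by studying the set
$\mathcal{A}=\{b\in\mathbb{N}\mid T\vdash\textsl{\textsf{Def}}_T^{\;<\overline{5}\cdot\overline{\ell_T}}(\overline{b})\}$
of numbers that $T$ \emph{provably} deems short-definable, and then taking $b$ to be the least number outside $\mathcal{A}$. The decisive step, and the only place where both hypothesis (iv) and the consistency of $T+\textsl{\textsf{Con}}(T)$ are used, is to show that $\mathcal{A}$ is \emph{finite}. To prove this, fix a model $M\models T+\textsl{\textsf{Con}}(T)$ and recall that there are only finitely many formulas of length below $5\ell_T$; writing $k_1,\dots,k_N$ for their codes, the arithmetization gives $T\vdash\textsl{\textsf{len}}(x)<\overline{5}\cdot\overline{\ell_T}\rightarrow\bigvee_{i\leqslant N}x=\overline{k_i}$, so that $M\models\textsl{\textsf{Def}}_T^{\;<\overline{5}\cdot\overline{\ell_T}}(\overline{b})$ forces $M\models\textsl{\textsf{Pr}}_T({\textsf{D}}(\overline{k_i},\overline{b}))$ for some $i$. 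Assigning to each $b\in\mathcal{A}$ such an index $i(b)$, I would argue that $b\mapsto i(b)$ is injective: if $i(b)=i(b')$ with $b\neq b'$, then $M$ satisfies $\textsl{\textsf{Pr}}_T({\textsf{D}}(\overline{k_i},\overline{b}))\wedge\textsl{\textsf{Pr}}_T({\textsf{D}}(\overline{k_i},\overline{b'}))\wedge\overline{b}\neq\overline{b'}$, whence $M\models\neg\textsl{\textsf{Con}}(T)$ by (iv), contradicting $M\models\textsl{\textsf{Con}}(T)$. Therefore $|\mathcal{A}|\leqslant N<\infty$.

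Next I would set $b=\min(\mathbb{N}\setminus\mathcal{A})$, which exists since $\mathcal{A}$ is finite. By minimality every $y<b$ lies in $\mathcal{A}$, so $T\vdash\textsl{\textsf{Def}}_T^{\;<\overline{5}\cdot\overline{\ell_T}}(\overline{y})$ for each such $y$, and the standard $\textsl{\textsf{Q}}$-fact $\textsl{\textsf{Q}}\vdash y<\overline{b}\rightarrow\bigvee_{j<b}y=\overline{j}$ upgrades the finitely many instances to $T\vdash\forall y<\overline{b}\,\textsl{\textsf{Def}}_T^{\;<\overline{5}\cdot\overline{\ell_T}}(y)$. Since $\textsl{\textsf{Boolos}}_T(x)$ is $\textsl{\textsf{Q}}$-equivalent to $\textsl{\textsf{Berry}}_T^{\;<\overline{5}\cdot\overline{\ell_T}}(x)=\neg\textsl{\textsf{Def}}_T^{\;<\overline{5}\cdot\overline{\ell_T}}(x)\wedge\forall y<x\,\textsl{\textsf{Def}}_T^{\;<\overline{5}\cdot\overline{\ell_T}}(y)$, this provable second conjunct yields the key equivalence $T\vdash\textsl{\textsf{Boolos}}_T(\overline{b})\leftrightarrow\neg\textsl{\textsf{Def}}_T^{\;<\overline{5}\cdot\overline{\ell_T}}(\overline{b})$. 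Consequently it suffices to establish the independence of $\neg\textsl{\textsf{Def}}_T^{\;<\overline{5}\cdot\overline{\ell_T}}(\overline{b})$, and the independence of $\textsl{\textsf{Boolos}}_T(\overline{b})$ follows at once.

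For the independence itself, one direction is immediate: $b\notin\mathcal{A}$ says precisely $T\nvdash\textsl{\textsf{Def}}_T^{\;<\overline{5}\cdot\overline{\ell_T}}(\overline{b})$, i.e.\ $T\nvdash\neg\textsl{\textsf{Boolos}}_T(\overline{b})$. For the other direction I would argue, exactly as in the proof of Theorem~\ref{both}, that $T\nvdash\textsl{\textsf{Boolos}}_T(\overline{b})$ (equivalently $T\nvdash\neg\textsl{\textsf{Def}}_T^{\;<\overline{5}\cdot\overline{\ell_T}}(\overline{b})$): were $T\vdash\textsl{\textsf{Boolos}}_T(\overline{b})$, the $\textsl{\textsf{Q}}$-provable uniqueness of the Berry formula would give $T\vdash\forall x[\textsl{\textsf{Boolos}}_T(x)\leftrightarrow x=\overline{b}]$, which is just $T\vdash{\textsf{D}}(\textsl{\textsf{Boolos}}_T,b)$; since $\textsl{\textsf{Boolos}}_T$ has length below $5\ell_T$, the sentence $\textsl{\textsf{Def}}_T^{\;<\overline{5}\cdot\overline{\ell_T}}(\overline{b})$ would then be a true $\Sigma_1$-statement and hence provable in $T$, forcing $b\in\mathcal{A}$ — a contradiction. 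Combining the two directions shows both $\neg\textsl{\textsf{Def}}_T^{\;<\overline{5}\cdot\overline{\ell_T}}(\overline{b})$ and $\textsl{\textsf{Boolos}}_T(\overline{b})$ are independent from $T$, as required.

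I expect the finiteness of $\mathcal{A}$ to be the main obstacle and the conceptual heart of the proof: condition (iv) is exactly a formalized assertion that a single short formula cannot provably define two distinct numbers without $T$ refuting its own consistency, and the consistency of $T+\textsl{\textsf{Con}}(T)$ is precisely what excludes this in a model where $\textsl{\textsf{Con}}(T)$ holds. The remaining steps are the familiar Boolos–Berry bookkeeping — uniqueness of the Berry definition and the length estimate $\textsl{\textsf{len}}(\textsl{\textsf{Boolos}}_T)<5\ell_T$, both already carried out in Theorem~\ref{both} — together with routine appeals to $\Sigma_1$-completeness; the one minor point to check carefully is the coding claim that every code of length below $5\ell_T$ is, provably in $\textsl{\textsf{Q}}$, one of the finitely many standard short codes, which is implicit in the paper's arithmetization.
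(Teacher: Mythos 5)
Your proof is correct, and from the choice of $b$ onward it coincides with the paper's: the paper likewise takes $b$ minimal with $T\nvdash\textsl{\textsf{Def}}_T^{\;<\overline{5}\cdot\overline{\ell_T}}(\overline{b})$, derives $T\vdash\forall y<\overline{b}\,\textsl{\textsf{Def}}_T^{\;<\overline{5}\cdot\overline{\ell_T}}(y)$ from the finitely many instances, and refutes $T\vdash\textsl{\textsf{Boolos}}_T(\overline{b})$ exactly as in Theorem~\ref{both}, with the final contradiction being against $T\nvdash\textsl{\textsf{Def}}_T^{\;<\overline{5}\cdot\overline{\ell_T}}(\overline{b})$ itself (so, as you note, no consistency appeal is needed at that point). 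The genuine difference is in the decisive first step. The paper argues \emph{syntactically}: assuming $T\vdash\textsl{\textsf{Def}}_T^{\;<\overline{5}\cdot\overline{\ell_T}}(\overline{i})$ for every $i\leqslant\Bbbk$ (with $\Bbbk$ bounding the codes of short formulas), it invokes the formalized pigeonhole principle of Lemma~\ref{lemphp} \emph{inside} $T$ together with (iv) to conclude $T\vdash\neg\textsl{\textsf{Con}}(T)$, contradicting the consistency of $T+\textsl{\textsf{Con}}(T)$; this establishes only that the complement of your $\mathcal{A}$ is nonempty. You argue \emph{semantically}: fixing a model $M\models T+\textsl{\textsf{Con}}(T)$ and applying the pigeonhole principle in the metatheory, you get the stronger quantitative conclusion that $\mathcal{A}$ is finite with explicit bound $N$. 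Your route dispenses with Lemma~\ref{lemphp} entirely, and it also tidies a slightly loose point in the paper's phrasing (the paper passes from a $T$-provable finite disjunction to a single $T$-provable disjunct, whereas the clean move is either to apply (iv) disjunct-wise or, as you do, to pick a true disjunct in a model); the cost is an appeal to the completeness theorem, which is harmless here since the result is non-constructive anyway (Theorem~\ref{uncombo}). Finally, the coding fact you rightly flag --- that $T$-provably any $x$ with $\textsl{\textsf{len}}(x)<\overline{5}\cdot\overline{\ell_T}$ equals one of the standard numerals $\overline{k_i}$ --- is a shared presupposition, not a weakness peculiar to your argument: the paper needs exactly the same fact to pass from $T\vdash\textsl{\textsf{Def}}_T^{\;<\overline{5}\cdot\overline{\ell_T}}(\overline{i})$ to $T\vdash\exists z<\overline{\Bbbk}\;\textsl{\textsf{Pr}}_T\big({\textsf{D}}(z,\overline{i})\big)$, so treating it as part of the arithmetization is consistent with the paper's own level of detail.
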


\begin{proof}
First we show that there exists some $a$ such that $T\nvdash\textsl{\textsf{Def}}_T^{\;<\overline{5}
\cdot\overline{\ell_T}}
(\overline{{a}})$. If not, then for any $i$  we have $T\vdash\textsl{\textsf{Def}}_T^{\;<\overline{5}
\cdot\overline{\ell_T}}
(\overline{{i}})$. Let $\Bbbk$ be a fixed number greater than the maximum G\"odel codes of formulas $\phi$ with  $\textsl{\textsf{len}}(\phi)<5\ell_T$. So, for any $i\leqslant\Bbbk$ we have the deduction  $T\vdash\exists z<\Bbbk\;\textsl{\textsf{Pr}}_T
\big({\textsf{D}}(z,{i})\big)$.
By Lemma~\ref{lemphp} there exists some $i<j\leqslant\Bbbk$ and some $\ell<\Bbbk$ such that $T\vdash\textsl{\textsf{Pr}}_T\big({\textsf{D}}
(\ell,{i})\big)\wedge\textsl{\textsf{Pr}}_T
\big({\textsf{D}}(\ell,{j})\big)$. Now (iv) implies that
 $T\vdash\neg\textsl{\textsf{Con}}(T)$,  a contradiction. Let $b$ be the minimum of those  $a$'s with $T\nvdash\textsl{\textsf{Def}}_T^{\;<\overline{5}
 \cdot\overline{\ell_T}}
(\overline{{a}})$. So, we have the deduction $T\vdash\forall z<\overline{b}\,\textsl{\textsf{Def}}_T^{\;<\overline{5}
 \cdot\overline{\ell_T}}
(z)$. Now we show that $T\nvdash\neg\textsl{\textsf{Def}}_T^{\;<\overline{5}
 \cdot\overline{\ell_T}}
(\overline{b})$: If not ($T\vdash\neg\textsl{\textsf{Def}}_T^{\;<\overline{5}
 \cdot\overline{\ell_T}}
(\overline{b})$) then $T\vdash
\textsl{\textsf{Berry}}_T^{\;<\overline{5}
 \cdot\overline{\ell_T}}
(\overline{b})$ or equivalently, $T\vdash\textsl{\textsf{Boolos}}_T(\overline{b})$. So, $b$ is definable in $T$ by a formula with length less than $5\ell_T$ (see the proof of Theorem~\ref{both}) whence $\textsl{\textsf{Def}}_T^{\;<\overline{5}
 \cdot\overline{\ell_T}}
(\overline{{b}})$ is true thus provable in $T$;  a contradiction. Therefore, we showed that $T\nvdash\textsl{\textsf{Def}}_T^{\;<\overline{5}
 \cdot\overline{\ell_T}}
(\overline{{b}})$ and $T\nvdash\neg\textsl{\textsf{Def}}_T^{\;<\overline{5}
 \cdot\overline{\ell_T}}
(\overline{{b}})$ (also $T\nvdash\textsl{\textsf{Boolos}}_T(\overline{{b}})$ and $T\nvdash\neg\textsl{\textsf{Boolos}}_T(\overline{{b}})$).
\end{proof}

\noindent 
Thus, the consistency of $T+\textsl{\textsf{Con}}(T)$ is an optimal (indeed, necessary and sufficient) condition for the independence of a Boolos sentence from $T$.

\section{Concluding Remarks}
The following table summarizes some of the new and old results in this paper:
\begin{center}
\begin{tabular}{|c||c|c|}
\hline
Proof & ~ \quad ~   Constructive ~ \quad ~ & ~ \;  ~ Rosser Property ~ \;  ~ \\
\hline
\hline
{\sc G\"odel}~(1931) \hfill \cite{godel}  &  $\checkmark$  \hfill  ~ & {\scriptsize  \textbf{\textsf{X}}} \hfill \cite{isaac}  \\
\hline
{\sc Rosser}~(1936) \hfill \cite{rosser} & $\checkmark$ \hfill  ~ & $\checkmark$ \hfill  ~ \\
\hline
{\sc Kleene}$_1$~(1936) \hfill \cite{kleene1}  & $\checkmark$ \hfill  ~ & {\scriptsize  \textbf{\textsf{X}}} \hfill  Theorem~\ref{nrk} \\
\hline
{\sc Kleene}$_2$~(1950) \hfill \cite{kleene2}  & $\checkmark$ \hfill  ~  &  $\checkmark$ \hfill  ~  \\
\hline
{\sc Chaitin}~(1971) \hfill \cite{chai} &  {\scriptsize  \textbf{\textsf{X}}} \hfill \cite{lambal,stillwell},\! Theorem~\ref{unconch} & $\checkmark$ \hfill Theorem~\ref{rosch} \\
\hline
{\sc Boolos}~(1989) \hfill \cite{boolos} &  {\scriptsize  \textbf{\textsf{X}}} \hfill Theorem~\ref{uncombo}   & {\scriptsize  \textbf{\textsf{X}}} \hfill Theorem~\ref{nrobo}  \\
\hline
\end{tabular}
\end{center}
Let us note that for the constructivity of a proof, usually, no new argument is needed as a computational procedure could often be seen from the proof. But the non-cosntructivity of a proof (as in the case of Chaitin's and Boolos' proofs) should be proved; proving the non-constructivity (the non-existence of any algorithm) is usually harder than showing the constructivity (the existence of an algorithm). So is having the Rosser property of a proof. Other than Rosser's proof and Kleene's symmetric theorem (1950) Chaitin's proof has the Rosser property. The non-Rosserian  proofs of G\"odel and Boolos need the consistency of $T+\textsl{\textsf{Con}}(T)$ for the independence of their true but unprovable sentences, and this condition, $\textsl{\textsf{Con}}\big(T+\textsl{\textsf{Con}}(T)\big)$, is optimal (for the independence of that sentences).

\paragraph{\textsl{\textbf{Acknowledgements}.}}
The authors warmly thank Professor Albert Visser for carefully reading the  paper and for his comments   which greatly improved the results of the paper and its presentation and notation.
This paper (except the second section on  Kleene's proof) is a part of the Ph.D. thesis of the second author written at  the University of Tabriz, {\sc Iran},   under the supervision of the first author who is partially supported by grant
$\textsf{N}^{\underline{\tt o}}$~94030033 from  the Institute for Research in Fundamental Sciences
  $\bigcirc\hspace{-2ex}{\not}\hspace{0.325ex}\bullet
  \hspace{-1.4ex}{\not}\hspace{-0.25ex}\bigcirc$
  $\mathbb{I}\mathbb{P}\mathbb{M}$.


\end{document}